\newcommand{\diff}{\mbox{\rm diff}}
\newcommand{\grad}{\nabla}
\newcommand{\laplace}{\Delta}
\renewcommand{\div}{\grad\cdot}
\newcommand{\N}{\mathbbm{N}}
\newcommand{\F}{{\mathcal F}}
\newcommand{\R}{\mathbbm{R}}
\renewcommand{\S}{\mathbbm{S}}
\newcommand{\Z}{\mathbbm{Z}}
\newcommand{\g}{\mbox{\sl g}}
\renewcommand{\Z}{\mathbbm{Z}}
\DeclareMathOperator{\spt}{supp}
\DeclareMathOperator{\Hess}{Hess}
\DeclareMathOperator{\hess}{hess}
\DeclareMathOperator{\argmin}{argmin}
\DeclareMathOperator{\supp}{supp}
\DeclareMathOperator{\gradient}{grad}
\def\loc{{\mathrm{loc}}}
\renewcommand{\L}{\ensuremath{\mathcal{L}}}
\newcommand{\Ha}{\ensuremath{\mathcal{H}}}
\newcommand{\D}{\ensuremath{\mathcal{D}}}
\renewcommand{\P}{\ensuremath{\mathcal{P}}}
\newcommand{\M}{\mathcal{M}}
\renewcommand{\H}{H^1_{\rho_*}}
\newcommand{\HaC}{\left.\Ha\right|_{C^{\infty}(\bar B_1)}}
\newcommand{\eps}{\varepsilon}
\newtheorem{remark}{Remark}
\newtheorem{theorem}{Theorem}
\newtheorem{proposition}{Proposition}
\newtheorem{lemma}{Lemma}
\newcommand{\tacka}{\, \cdot\,}
\newcommand{\pme}{porous medium equation}
\begin{document}

\title{Long-time asymptotics for the porous medium equation: The spectrum of the linearized operator\tnoteref{t1}}
\tnotetext[t1]{The author acknowledges support through NSERC grant 217006-08. }

\author{Christian Seis}
\ead{cseis@math.toronto.edu}

\address{University of Toronto,
Department of Mathematics,
40 St. George Street,
Toronto, Ontario M5S 2E4,
Canada,
Tel.: +1-416-946-3769,
Fax: +1-416-978-4107
}
\date{\notused}

\begin{abstract}
We compute the complete spectrum of the displacement Hessian operator, which is obtained from the confined porous medium equation by linearization around its stationary attractor, the Barenblatt profile. On a formal level, the operator is conjugate to the Hessian of the entropy via similarity transformation. We show that the displacement Hessian can be understood as a self-adjoint operator and find that its spectrum is purely discrete. The knowledge of the complete spectrum and the explicit information about the corresponding eigenfunctions give new insights on the convergence and higher order asymptotics of solutions to the porous medium equation towards its attractor. More precisely, the inspection of the eigenfunctions allows to identify symmetries in $\R^N$ with flows whose rates of convergence are faster than the uniform, translation-governed bound. The present work complements the analogous study of Denzler \& McCann for the fast-diffusion equation.
\end{abstract}

\begin{keyword}
porous medium equation, long-time asymptotics, self-similar solution, spectral analysis
\end{keyword}

\maketitle

\section{Introduction}

In this paper, we study the long-time asymptotics of nonnegative solutions to the porous medium equation, i.e.,
\begin{equation}\label{eq1}
\partial_t \rho - \laplace(\rho^m)\;=\; 0\quad\mbox{in }\R^N,
\end{equation}
with exponent $m>1$. This equation is best known for modeling the flow of an isentropic gas through a porous medium; other applications include groundwater infiltration, population dynamics, and heat radiation in plasmas (cf.\ \cite[Ch.\ 2]{Vazquez07}).  

\medskip

Solutions to \eqref{eq1} feature different phenomena depending on the degree of the nonlinearity $\rho^m$. In the case $m=1$, equation \eqref{eq1} is the ordinary diffusion (or heat) equation. For $0<m<1$, the diffusion flux $m\rho^{m-1}$ diverges as $\rho$ vanishes and thus, for suitable initial data, the solution spreads over the whole $\R^N$ immediately. In this situation, equation \eqref{eq1} is often called the {\em fast diffusion} equation. In the porous-medium range $m>1$, the diffusion flux increases with the density and degenerates where $\rho=0$. Consequently, solutions will preserve a compact support and hence this type of propagation goes by the name {\em slow diffusion}. In the present paper, we restrict our attention exclusively to the latter case. For a study of the fast diffusion equation and more general evolutions of porous-medium type, we refer to {\sc V\'azquez} \cite{Vazquez06} and references therein.

\medskip

The Cauchy problem for the porous medium equation is solved in various settings. As the equation is degenerate parabolic, that is, it is parabolic only where the solution is positive, solutions are in general not classical. More precisely, if the initial datum is zero in some open subset of $\R^N$, then there is a slowly propagating free boundary that separates the region where the solution is positive from the region where it is zero. For suitable initial configurations, e.g.\ $0\le\rho_0\in L^1(\R^N)$, unique strong solutions are known to exist, and these solutions are bounded and continuous. Moreover, strong solutions preserve total mass, $\|\rho(t,\tacka)\|_{L^{1}(\R^N)}=\|\rho_0\|_{L^{1}(\R^N)}$ for all $t>0$. The Cauchy problem for the porous medium equation is reviewed by {\sc V\'azquez} in \cite{Vazquez07}.

\medskip

It is well-known \cite{ZeldovicKompaneec50,Barenblatt52,Prattle59} that the porous medium equation allows for self-similar solutions, so-called Barenblatt solutions, propagating on the scale $|x|=t^\alpha$ where
\[
\alpha\;=\; \frac1{N(m-1)+2},
\]
and given by
\begin{equation}\label{eq3}
\rho_*(t,x)\;=\; \frac1{t^{N\alpha}}\left(L-\frac{\alpha(m-1)}{2m} \frac{|x|^2}{t^{2\alpha}}\right)_+^{1/(m-1)}.
\end{equation}
Here, $(\cdot)_+=\max\{\tacka,0\}$ and  $L$ is an arbitrary constant that can be fixed, for instance, by normalizing the total mass or by choosing the radius of the support of $\rho_*$. It is interesting to note that although the Barenblatt solution is a strong solution of the porous medium equation, it is not a solution of the Cauchy problem as $\rho_*$ diverges to the delta measure $\delta_0$ (times a constant depending on $L$) at time zero --- a reason for which it is often called a ``source-type solution''.

\medskip

The Barenblatt solution describes the long-time behavior of any solution $\rho$ having same total mass as $\rho_*$. Indeed, for arbitrary non-negative initial data in $L^1$, solutions spread with a finite propagation speed over the whole space and, while diffusing, the shape of the solution becomes asymptotically close to the profile of the Barenblatt solution,
\[
\rho(t,\tacka)\approx\rho_*(t,\tacka)\quad\mbox{as }t\gg1.
\]
This long-time behavior was intensively studied over many years, starting with the work by {\sc Kamin} \cite{Kamin73,Kamin75}, who established uniform convergence in one space dimension. The generalization to several dimensions is due to {\sc Friedman} \& {\sc Kamin} \cite{FriedmanKamin80} and {\sc Kamin} \& {\sc V\'azquez} \cite{KaminVazquez88}. These authors use similarity (rescaling) transformations and identify the limit orbits for the porous medium equation via compactness arguments. A completely different approach relies on energy (or entropy) methods, a way prepared by {\sc Ralston} in \cite{Ralston84}, who established a first convergence result in $L^1$ in one dimension. This approach was generalized independently by {\sc Carrillo \& Toscani, Otto}, and {\sc Del Pino \& Dolbeault} in \cite{CarrilloToscani00,Otto01,DelPinoDolbeault02} to arbitrary dimensions. In these articles, instead of using geometric properties of the density function, the authors study the abstract energy landscape and compute the decay rates of the energy (or entropy) functional. In \cite{CarrilloToscani00} and \cite{Otto01}, these decay rates are converted into bounds on the asymptotics in $L^1$ and in the Wasserstein distance, respectively. We refer to {\sc V\'azquez}' excellent review article \cite{Vazquez03} for an almost complete discussion of the long-time asymptotics of solutions to the porous medium equation, including a discussion on the optimal rates of convergence. 

\medskip

The investigation of the optimal rate at which solutions converge to the Barenblatt solution is tied together with the choice of the initial data. The minimal requirement of integrability of the initial data --- which is from the physical point of view natural when thinking of $\rho_0$ as a concentration density --- is mathematically necessary as solutions to non-integrable initial data, i.e., $\rho_0\in L^1_{\loc}\setminus L^1$, show a different behavior in the long-time limit (cf.\ \cite[Cor.\ 1.2]{Vazquez03}). The authors in \cite{CarrilloToscani00,Otto01,DelPinoDolbeault02} derive algebraic rates of convergence for finite entropy solutions, cf.\ \eqref{eq41} below. The exponent in the algebraic rate is estimated by $\alpha$, which is optimal for spatial translations of the Barenblatt solution. The goal of the present article is to go one step further and to study the complete spectrum of a linearized version of the porous medium equation.  The knowledge of the spectrum and of all eigenfunctions not only yields information about the slowest converging modes (like a spectral gap estimate would do), but also allows to extract information about the characteristic geometric pathologies to all orders. In particular, we will study the role of translations, shears, and dilations for the convergence rates of solutions towards its attractor $\rho_*$. While spectral properties of the heat equation (i.e., \eqref{eq1} with $m=1$) are well-known in any space dimension, the investigation of the spectrum of the porous medium equation is new in the multi-dimension case. In the one-dimensional setting,  {\sc Angenent} \cite{Angenent88} computed the long-time asymptotics to all orders for solutions with compactly supported initial data. An earlier formal spectral analysis is due to {\sc Zel'dovic \& Barenblatt} \cite{ZeldovicBarenblatt58}. Progress in the analogous problem for the fast diffusion equation was recently obtained by {\sc Denzler, Koch \& McCann} \cite{DenzlerKochMcCann13}: the authors compute higher order asymptotics in weighted H\"older spaces, building up on earlier results for the linearized fast diffusion equation derived by  {\sc Denzler \& McCann} \cite{DenzlerMcCann05}.

\medskip

Our general approach to compute the spectrum of the linearized porous medium equation follows the one of Denzler \& McCann \cite{DenzlerMcCann05}, who studied the analogous problem for the fast diffusion equation. Both works are based on a method which is fairly common in the quantum mechanics literature, more precisely in the spectral analysis of Schr\"odinger operators, see e.g.\ \cite{NikiforovUvarov}. However, the present work and the one in \cite{DenzlerMcCann05} differ in some aspects. A main difference relies on the occurrence of a free boundary in the porous medium regime. In particular, the Barenblatt solution has compact support in $\R^N$, opposed to the situation for $m\le1$. This fact will play a crucial role in the definition of the linearized operator as (``asymptotic'') boundary conditions on the boundary of the support of the Barenblatt solution have to be taken into consideration.  Somehow surprisingly (and unexpected, see \cite[p.\ 303]{DenzlerMcCann05}), compared to the analogous study for the fast diffusion regime, we are able to simplify the analysis of our linearized operator {\em thanks to} the compact support of $\rho_*$ in many aspects: Parts of our arguments are built on elliptic (and spectral) theory of differential operators on bounded domains. The suitable elliptic theory is derived in the appendix of this paper.

\medskip

We caution the reader that our linearization of the porous medium equation performed in subsection \ref{S1.3} is only formally justified. In particular, the rigorous results obtained for the linearized operator have to be considered as conjectures for the asymptotics of solutions to the nonlinear equation \eqref{eq1}.


\subsection{Rescaling}

In order to investigate the asymptotic behavior of solutions to the porous medium equation, it is convenient to rescale variables in such a way that the Barenblatt solution becomes a stationary solution, that means $|x|\sim t^{\alpha}$ in view of \eqref{eq3}. One convenient choice of new variables is
\[
x = \beta t^{\alpha} \hat x,\quad t= \exp(\alpha^{-1}\hat t\,),\quad\mbox{and}\quad \rho=(\alpha \beta^2)^{1/(m-1)}  t^{- \alpha N}\hat \rho,
\]
with
\[
\beta=\left(\frac{2m L}{\alpha(m-1)}\right)^{1/2},
\]
to the effect of
\[
\rho(t,x)= \frac{(\alpha \beta^2)^{1/(m-1)}}{t^{ N\alpha}}\hat \rho\left(\alpha \ln t, \frac{x}{\beta t^{\alpha}}\right).
\]
We remark that the logarithmic rescaling of the time variable, $\hat t=\alpha \ln  t$, guarantees the parabolic structure of the equation. In fact, with its new variables, equation \eqref{eq1} turns into the nonlinear Fokker--Planck equation
\begin{equation}\label{eq2}
\partial_{\hat t}\hat \rho - \hat\grad\cdot(\hat x \hat\rho)- \hat\laplace(\hat\rho^m) \;=\;0,
\end{equation}
sometimes also called the {\em confined} \pme. Moreover, with that particular choice of $\beta$, the rescaled, time-independent Barenblatt solution $\hat\rho_*$ concentrates on a ball of radius one around the origin,
\begin{equation}\label{eq51}
\frac{2m}{m-1}\hat\rho_*(\hat x)^{m-1}\;=\;  \left(1-|\hat x|^2\right)_+.
\end{equation}
This normalization will simplify the notation in the subsequent sections. In the sequel, we will often refer to $\hat\rho_*$ as the {\em Barenblatt profile}.

\medskip

The advantage of working with \eqref{eq2} instead of \eqref{eq1} relies on the fact that the above rescaling comes along with a change of perspective: the Barenblatt solution now becomes a fixed point of the equation. In fact, as we will see below, for fixed total mass $\hat M$, it is both the unique stationary solution of \eqref{eq2} and the ground state of the physical entropy
\begin{equation}\label{eq41}
\hat E(\hat \rho)\;=\; \frac1{m-1}\int \hat\rho(\hat x)^m\, d\hat x + \frac{1}2 \int |\hat x|^2 \hat\rho(\hat x)\, d\hat x.
\end{equation}
In this regard, the porous medium equation \eqref{eq1} shows a substantially different behavior than the confined \pme\ \eqref{eq2}: There is no stationary solution of \eqref{eq1} under the mass constraint $\|\rho\|_{L^1}=M$ for any $M>0$. In other words, solutions to the porous medium equation do not relax in finite time and the ground state can be attained only asymptotically in the long-time limit.

\medskip

Somehow related to this feature is the fact that, opposed to the original equation \eqref{eq1}, the confined \pme\ \eqref{eq2} is no longer invariant under spatial translations. More precisely, in the new variables the origin is the unique attraction point in $\R^N$ around which the mass asymptotically concentrates, driven by the convection term $-\hat\grad\cdot(\hat x \hat\rho)$. Due to this loss of translation invariance it is therefore not surprising that those initial data $\hat\rho_0$ that differ from the ground state $\hat\rho_*$ only by a shift, $\hat\rho_0(\hat x) = \hat\rho_*(\hat x- \hat x_0)$ for some $\hat x_0\in\R^N$, will play a distinguished role in the discussion of our result on refined asymptotics of \eqref{eq2}: Spatial translations correspond to the smallest eigenvalue of the linearized operator and generate thus those solutions that saturate the optimal rate of convergence.

\medskip

The above rescaling, however, also has a small drawback: It is singular at time zero. To compare ``initial data'', one would better consider the moment $t=1$. More precisely, up to spatial dilations both in the $x$ and $\rho$ variables, the initial datum $\hat\rho(\hat t= 0,\tacka)$ corresponds to the original solution at time one $\rho(t=1,\tacka)$. Because of our interest in the long-time dynamics, this skew correspondence, however, has no influence neither on the subsequent analysis nor on the interpretation of our result.

\medskip

It remains to discuss the stationarity of $\hat\rho_*$ for the nonlinear Fokker--Planck equation \eqref{eq2} and its minimality for the entropy \eqref{eq41}. Calculating the energy dissipation along trajectories of the flow \eqref{eq2}, 
\[
\frac{d}{d\hat t} \hat E(\hat\rho( \hat t))\;=\; -\int \hat \rho\left|\hat\grad\left(\frac12|\hat x|^2 + \frac{m}{m-1}\hat\rho^{m-1}\right)\right|^2\, d\hat x,
\]
we see that the dissipation rate is zero if and only if $\frac12|\hat x|^2 + \frac{m}{m-1}\hat\rho^{m-1}$ is constant, that is, if and only if $\hat \rho$ is of the form \eqref{eq51}, provided that $\hat \rho$ and $\hat \rho_*$ have the same total mass. This implies that $\hat\rho_*$ is the unique stationary solution of \eqref{eq2}. To deduce the minimality of $\hat\rho_*$ for the entropy $\hat E$, we additionally observe that the entropy is strictly convex on the convex configuration space of nonnegative densities with fixed total mass $\hat M$ and admits thus a unique minimizer.

\subsection{Otto's gradient flow interpretation and the Wasserstein distance}\label{SO}

In his seminal paper \cite{Otto01}, {\sc Otto} introduced a new and physically meaningful gradient flow interpretation of the porous medium equation \eqref{eq1}. This interpretation is based on the Lyapunov approach of {\sc Newman} \& {\sc Ralston} in \cite{Newman84,Ralston84}. In the abstract setting, a gradient flow is the dynamical system on a Riemannian manifold $(\M, \g)$ that describes the evolution as the steepest descent in an energy landscape, i.e.,
\begin{equation}\label{eq31}
\partial_t\rho + \gradient E(\rho)\;=\;0.
\end{equation}
Here, the gradient $\gradient E$ of the energy $E$ is defined via Riesz' representation theorem
\[
\diff E(\rho).\delta \rho\;=\; \g_{\rho}(\gradient E(\rho),\delta \rho)\quad \mbox{for all }\rho\in T_{\rho}\M,
\]
where $\diff E$ denotes the differential of $E$ and $T_{\rho}\M$ is the tangent plane at the point $\rho\in\M$. A short formal computation shows that \eqref{eq31} can be equivalently stated as
\begin{equation}\label{eq31a}
\partial_t\rho\;=\;\argmin \left\{ \frac12\g(\delta\rho, \delta\rho) + \diff E(\rho).\delta\rho\: :\: \delta\rho\in T_{\rho}\M\right\}.
\end{equation}
In the following, we identify the gradient flow ingredients $\M$, $\g$, and $E$ that constitute the porous medium equation using Otto's approach. (In fact, different gradient flow interpretations were proposed for the porous medium equation. Otto's gradient flow is natural in view of the thermodynamical background of the equation. Moreover, it also pertains to the confined evolution, whereas the traditional approaches do not.)

\medskip

As solutions to \eqref{eq1} preserve non-negativity (in fact, solutions satisfy a comparison principle) and the total mass $M$, the manifold in the gradient flow interpretation of the porous medium equation has to be chosen as
\[
\M\;=\; \left\{ \rho\: :\: \rho\ge 0\mbox{ and }\int\rho\, dx=M\right\}.
\]
Tangent fields $\delta\rho$ at $\rho\in\M$ must be mass-preserving and non-negative where $\rho=0$. For tangent fields satisfying $\spt(\delta \rho)\subset\spt(\rho)$, the metric tensor is defined by
\[
\g_{\rho}(\delta\rho,\delta\rho)\;=\; \int \rho |\grad \psi|^2\, dx,
\]
where $\psi$ and $\delta\rho$ are related via the elliptic boundary value problem
\begin{equation}\label{eq16}
\begin{array}{rcll}
-\div\left(\rho\grad \psi\right) &=& \delta \rho&\quad\mbox{in }\spt(\rho),\\
\rho \grad \psi\cdot\nu &=&0&\quad\mbox{on }\partial\spt(\rho).
\end{array}
\end{equation}
Here, $\nu$ denotes the outer normal vector on $\partial\supp( \rho)$. The second condition has to be interpreted as an ``asymptotic'' boundary condition, see also \eqref{eq52} or Remark \ref{R1} below.  Notice that $\grad \psi$ may be non-zero on $\partial \supp(\rho)$, so that the boundary is ``free'' fo move.  For all other tangent fields, we set $\g_{\rho}(\delta\rho,\delta \rho)=+\infty$. 
Actually, arguing even more formally, Otto set the elliptic problem \eqref{eq16} in the entire space $\R^N$, cf.\ \cite[eq.\ (8)]{Otto01}. However, in order to motivate our later analysis, here, we equip the equation with its natural boundary conditions. That is, assuming enough regularity of $\rho$ and $\delta \rho$, the distributional solution $\psi$ of $-\div\left(\rho\grad \psi\right)=\delta\rho$ satisfies \eqref{eq16}. Equivalently, we can characterize the metric tensor variationally:
\[
\frac12 \g_{\rho}(\delta\rho, \delta\rho) \;=\; \sup_{\varphi} \left\{-\frac12\int \rho |\grad\varphi|^2\, dx- \int \varphi\delta\rho \, dx\right\},
\]
cf.\ \cite[eq.\ (2.9)]{OttoWestdickenberg05}, where the supremum is taken over all smooth functions $\varphi$ on $\R^N$. Indeed, $\g_{\rho}(\delta \rho,\delta\rho)$ is finite if and only if $\spt(\delta\rho)\subset \spt(\rho)$, and then the optimal $\varphi$ in this formulation satisfies \eqref{eq16}. When studying the gradient flow dynamics \eqref{eq31a}, it is enough to restrict the tangent plane to those tangent fields for which the metric tensor is finite. Hence, by identifying tangent fields $\delta\rho$ with the variables $\psi$ via \eqref{eq16}, the tangent plane at $\rho$ can be written as the weighted Sobolev space
\begin{equation}\label{eq50}
T_{\rho}\M\;=\; \left\{\psi\: :\: \int\rho|\grad\psi|^2\, dx<\infty\right\}.
\end{equation}

\medskip

The energy functional in the gradient flow interpretation is given by the entropy
\[
E(\rho)\;=\; \frac1{m-1}\int \rho^{m}\, dx.
\]
A short computation now shows the porous medium equation \eqref{eq1} is indeed the gradient flow for $(\M, \g)$ and $E$ defined as above, see \cite[Sec.\ 1.3]{Otto01} for details. 

\medskip

As for the confined \pme\ \eqref{eq2}, we remark that $\hat \rho$ still evolves according to the gradient flow on the same Riemannian manifold as $\rho$, now with total mass $\hat M$ and denoted by $(\hat\M,\hat\g)$, but for the energy $\hat E$ defined in \eqref{eq41}.

\medskip

We finally address the induced geodesic distance on the Riemannian manifold $(\hat \M,\hat\g)$. {\sc Benamou \& Brenier} \cite{BenamouBrenier00} discovered the relation between the Kantorovich mass transfer problem and continuum mechanics by identifying the geodesic distance on $(\hat \M,\hat\g)$ between two configurations $\hat\rho_0$, $\hat\rho_1$ having finite second moments $\int| \hat x|^2\hat \rho_i\, d\hat x<\infty$ with the {\em Wasserstein distance}
\[
d_2(\hat\rho_1,\hat\rho_0)^2\;=\; \inf_{\mu\in\Gamma(\hat\rho_1,\hat\rho_0)}\iint | \hat x_1-\hat x_0|^2\, \mu(d \hat x_1d\hat x_0).
\]
Here the infimum is taken over the space $\Gamma(\hat\rho_1,\hat\rho_0)$ of all nonnegative measures $\mu$ on $\R^N\times\R^N$ having marginals $\hat\rho_1\, d\hat x_1$ and $\hat\rho_0\, d\hat x_0$, i.e.,
\[
\iint \zeta(\hat x_i)\mu(d\hat x_1 d\hat x_0)=\int \zeta(\hat x_i)\  \hat\rho_i(\hat x_i) d\hat x_i
\]
for all $\zeta\in C_0(\R^N)$ and $i=0,1$. For a detailed study of Wasserstein distances and more general optimal transportation distances, we refer the interested reader to {\sc Villani's} two monographs \cite{Villani03,Villani09}.

\subsection{Linearization}\label{S1.3}

Instead of linearizing the confined \pme\ \eqref{eq2} around the Barenblatt profile $\hat \rho_{*}$, we imitate the approach of {\sc Denzler \& McCann} \cite{DenzlerMcCann05} and compute the Hessian of the entropy $\hat E$ at $\hat\rho_*$. In view of the gradient flow formulation described in the previous subsection, this is formally equivalent to linearizing \eqref{eq2} directly, but it has the advantage that a natural class of perturbations is intrinsically given by the tangent fields.

\medskip

Tangent fields $\delta \hat \rho$ are mass-preserving and concentrate on the support of $\hat \rho_{*}$. Following Otto, and as described in the previous subsection, we identify such tangent fields with new variables $\psi$ via the elliptic problem \eqref{eq16}, i.e.,
\begin{equation}\label{eq17}
\begin{array}{rcll}
-\div(\hat \rho_{*} \grad\psi)&=& \delta\hat\rho&\mbox{in }B_1,\\
\hat\rho_*\hat \grad\psi\cdot\nu&=&0&\mbox{on }\partial B_1,
\end{array}
\end{equation}
where $B_1$ denotes the ball of radius one centered at the origin, and thus $B_1=\supp(\hat\rho_*)$. Moreover, $\psi$ is such that
\begin{equation}\label{eq4}
\int \hat \rho_*|\hat  \grad\psi|^2\, d\hat x<\infty,
\end{equation}
cf.\ \eqref{eq50}. Geodesic curves $\{\hat\rho_s\}_{|s|\ll1}$ on $\M$ passing through $\hat\rho_*$ and pointing in the direction $\delta\hat\rho$ are obtained by {\sc McCann}'s \cite{McCann97} displacement interpolation
\begin{equation}\label{eq61}
\hat\rho_{*}(\hat x)\;=\; \det\left(I + s\hat \grad^2 \psi(\hat x)\right)\hat \rho_s\left(\hat x + s\hat \grad\psi(\hat x)\right)
\end{equation}
(see also \cite[Sec.\ 4.3]{Otto01}), that is, $\hat\rho_*$ is pushed forward by the map $\mbox{\rm id}  + s\hat \grad\psi(\tacka)$. We verify that
\[
\delta\hat \rho\;=\;\left.\partial_{s}\right|_{s=0}\hat \rho_s\;=\; -\hat\grad\cdot\left(\hat\rho_*\hat\grad\psi\right).
\]
Recalling that the Hessian $\hess \hat E$ of a function $\hat E$ on a Riemannian manifold $(\hat\M,\hat\g)$ can be calculated as the second derivative of $\hat E$ along geodesics, we define and have
\[
\Hess \hat E(\hat\rho_*)(\psi,\psi) \;:=\; \hat\g_{\hat\rho_*}(\delta \hat\rho, \hess \hat E(\hat\rho_*)\delta\hat\rho)\;=\;\left. \frac{d^2}{ds^2} \right|_{s=0}\hat E(\hat \rho_s).
\]
A {\em formal} computation yields:
\begin{eqnarray*}
\lefteqn{\Hess\hat  E(\hat\rho_*)(\psi,\psi)}\\
&=& m \int \hat \rho_*^{m-2} \left(\left.\partial_{s}\right|_{s=0}\hat \rho_s\right)^2\, d\hat x + \int \left(\frac{m}{m-1} \hat \rho_*^{m-1} + \frac{1}2 |\hat x|^2\right)\left.\partial_{s}^2\right|_{s=0}\hat \rho_s\, d\hat x.
\end{eqnarray*}
Observe that $\supp( \left.\partial_{s}^2\right|_{s=0}\hat \rho_s)\subset \supp(\hat\rho_*)$ and $\int \left.\partial_{s}^2\right|_{s=0}\hat \rho_s\, d\hat x=0$, and so the second integral on the right vanishes thanks to the definition of the Barenblatt profile $\hat \rho_*$. Because of $\left.\partial_{s}\right|_{s=0}\hat \rho_s=-\hat\grad\cdot\left(\hat\rho_*\hat\grad\psi\right)$, we can rewrite the Hessian as
\[
\Hess \hat E(\hat \rho_*)(\psi,\psi)\; =\; m \int \hat \rho_*^{m-2}\left(\hat \grad\cdot \left(\hat \rho_* \hat \grad\psi\right)\right)^2\, d\hat x .
\]
Finally, integration by parts yields
\[
\Hess\hat  E(\hat \rho_*)(\psi,\psi)\; =\; \int \hat \rho_* \hat \grad \psi \cdot \hat \grad\left(-m\hat \rho_*^{m-2}\hat \grad\cdot\left(\hat \rho_*\hat \grad\psi\right)\right)\, d\hat x.
\]
At this point, we consider the derivation of the Hessian on a purely formal level. The above computations certainly hold true for functions $\psi$ that are smooth up to the boundary. In this case, the integrals are well defined and we can integrate by parts since $\hat\rho_*^{m-1}$ vanishes on $\partial \supp(\hat\rho_*)$.

\medskip

We finally complete this subsection by introducing the {\em displacement Hessian}
\begin{equation}\label{eq6}
\Ha \psi\;=\;  -m\hat \rho_*^{m-2}\hat \grad\cdot\left(\hat \rho_*\hat \grad\psi\right).
\end{equation}
In view of the explicit formula \eqref{eq51} for $\hat \rho_*$, the displacement Hessian can be rewritten as
\begin{equation}\label{eq5}
\Ha\psi(x)\;=\; 
  - \frac{m-1}2\left(1 - |\hat x|^2 \right)\hat \laplace \psi(\hat x)+\hat x\cdot\hat \grad\psi(\hat x)\quad\mbox{for }\hat x\in B_1.
\end{equation}
In terms of $\Ha$, the linearized confined \pme\ reads $\partial_{\hat t} \psi +\Ha \psi=0$, or on the level of $\delta \hat \rho$:
\begin{equation}\label{eq60}
\partial_{\hat t} \delta\hat \rho + \L^{-1}\Ha\L \delta\hat \rho \;=\;0,
\end{equation}
where the operator $\L$ is defined via $\L\delta\hat \rho=\psi$ and $\delta \hat \rho$ and $\psi$ are related in the usual way. Moreover, a formal analysis yields that $\Ha$ and $\hess$ are similar in the sense that $\hess=\L^{-1}\Ha\L$.

\medskip

The remainder of the paper is exclusively devoted to the analysis of the displacement Hessian. We will show in Proposition \ref{P1} below, that it defines a self-adjoint operator on a suitable domain $\D(\Ha)$. Our main result, stated in Theorem \ref{T1} below, gives the complete spectrum of $\Ha$.


\section{Rigorous part}
From now on we claim mathematical rigor. As we focus exclusively on the confined porous medium equation \eqref{eq2} we may simplify our notation and drop the hats from the rescaled variables.

\medskip

Let $H_{\rho_*}^1$ denote the class of all locally integrable functions on $B_1=\supp( \rho_* )$ such that \eqref{eq4} holds, i.e.,
\[
\|\psi\|_{H_{\rho_*}^1}^2:=\int\rho_*|\grad\psi|^2\, dx<\infty,
\]
with the identification of two functions if they only differ by a constant. Observe that this space is a separable Hilbert space for the topology induced by the norm $\|\cdot\|_{\H}$. In the appendix, we study some properties related to elliptic theory in $\H$.

\medskip

So far, the derivation of the Hessian and the definition of the displacement Hessian were only formally justified. Hence, as a first step before embarking into spectral analysis, we have to build a rigorous basis for our investigations. Since the formal computations of the preceding section are certainly valid for functions that are smooth up to the boundary of the support of the Barenblatt profile, i.e., $C^{\infty}(\bar B_1)$ functions, let us denote the displacement Hessian defined in \eqref{eq6} by $\left.\Ha\right|_{C^{\infty}(\bar B_1)}$. A short computation yields that this operator is nonnegative and symmetric (cf.\ proof of Proposition \ref{P1}). In Subsection \ref{S2.1} below, we show that $\HaC$ extends naturally to a nonnegative self-adjoint operator $\Ha$ with domain
\begin{eqnarray}
\lefteqn{\D(\Ha)=\left\{\psi\in H_{loc}^3(B_1)\cap H_{\rho_*}^1:\right.}\nonumber\\
&&\hspace{3em}\left.\rho^{m-2}_*\div\left(\rho_*\grad\psi\right)\in\H,\  \rho_*\grad\psi\cdot\nu=0\mbox{ on }\partial B_1 \right\}.\label{eq62}
\end{eqnarray}
Here, the condition $\rho_*\grad\psi\cdot\nu=0$ on $\partial B_1$ means that
\begin{equation}\label{eq52}
\int \rho_* \grad \xi \cdot\grad\psi\, dx\;=\; -\int \xi \div(\rho_* \grad\psi)\, dx \quad\mbox{for all }\xi \in H_{\rho_*}^1.
\end{equation}
We also remark that the regularity that is assumed in the definition of $\D(\Ha)$ implies that $\Ha\psi=-m\rho^{m-2}_*\div\left(\rho_*\grad\psi\right)$ for all $\psi\in\D(\Ha)$, where the derivatives have to be understood in the weak Sobolev sense.

\begin{remark}\label{R1}
The boundary conditions in \eqref{eq52} are {\rm asymptotic} boundary conditions in the sense that
\begin{equation}\label{eq58}
\lim_{|x|\uparrow1} \rho_*(x) \grad\psi(x)\cdot\frac{x}{|x|} = 0,
\end{equation}
for every function $\psi\in C^{\infty}( B_1)$. In fact, thanks to the density of $C^{\infty}(\bar B_1)$ functions in $\H$ by Lemma \ref{L1} in the appendix, and the Sobolev embedding $\H \subset L^2(B_1, \rho_*^{2-m} dx)$ in Lemma \ref{L4} in the appendix, it is enough to restrict condition \eqref{eq52} to functions $\xi\in C^{\infty}(\bar B_1)$. Then, the equivalence of \eqref{eq52} and \eqref{eq58} follows immediately (for instance, by an indirect argument).
\end{remark}

\subsection{Main results}

We are now in the position to state our main results:

\begin{theorem}\label{T1}
The operator $\Ha: \D(\Ha)\to \H$ is self-adjoint. Its spectrum consists only of eigenvalues, given by
\[
\lambda_{\ell k} \;=\;\ell + 2k + 2k(k + \ell + \frac N 2-1)(m-1),
\]
where $(\ell,k)\in\N_0\times\N_0\setminus\{(0,0)\}$ if $N\ge 2$ and $(\ell,k)\in\{0,1\}\times\N_0\setminus\{(0,0)\}$ if $N=1$. The corresponding eigenfunctions are given by polynomials of the form
\[
\psi_{\ell n k}(x) =   F(-k, \frac1{m-1} +\ell +\frac{N}2 -1 +k; \ell+\frac{N}2;|x|^2)Y_{\ell n}\left(\frac{x}{|x|}\right)|x|^{\ell},
\]
where $n\in\{1,\dots,N_{\ell}\}$ with $N_{\ell}=1$ if $\ell=0$ or $\ell=N=1$ and $N_{\ell}= \frac{(N+\ell-3)!(N+2\ell-2)}{\ell!(N-2)!}$ else. Here, $F(a,b;c;z)$ is a hypergeometric function and in the case $N\ge2$, $Y_{\ell n}$ is a spherical harmonic corresponding to the eigenvalue $\ell(\ell + N-2)$ of $-\laplace_{\S^{N-1}}$ with multiplicity $N_{\ell}$. Otherwise, if $N=1$ it is $Y_{\ell 1}(\pm1)=(\pm1)^{\ell}$.
\end{theorem}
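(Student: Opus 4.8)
The plan is to diagonalize $\Ha$ explicitly by reducing it to a one-dimensional singular Sturm--Liouville problem in the radial variable, after separating out the angular dependence via spherical harmonics. First I would substitute the ansatz $\psi(x) = f(|x|^2)\, Y_{\ell n}(x/|x|)\, |x|^{\ell}$ into the explicit formula \eqref{eq5} for $\Ha$. Using that $Y_{\ell n}$ is an eigenfunction of $-\laplace_{\S^{N-1}}$ with eigenvalue $\ell(\ell+N-2)$, and that $r^{\ell} Y_{\ell n}$ is harmonic, the Laplacian term $-\frac{m-1}{2}(1-|x|^2)\laplace\psi$ and the drift term $x\cdot\grad\psi$ both collapse, and after the change of variable $z = |x|^2$ the eigenvalue equation $\Ha\psi = \lambda\psi$ becomes a second-order ODE for $f(z)$ on $(0,1)$ of hypergeometric type: roughly $z(1-z) f'' + (c - (a+b+1)z) f' - ab\, f = 0$ with $c = \ell + N/2$ and the remaining parameters determined so that the indicial and confluence data match. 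Identifying the parameters gives $a = -k$, $b = \frac{1}{m-1} + \ell + \frac N2 - 1 + k$, which forces termination of the hypergeometric series at $k\in\N_0$ (the only way to get a solution regular at $z=1$, i.e.\ at the free boundary $|x|=1$, for generic $b$), and reading off the constant term yields $\lambda_{\ell k} = ab/(\text{normalization}) = \ell + 2k + 2k(k+\ell+\frac N2 - 1)(m-1)$. This is the routine computational core.

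The genuinely delicate part is the functional-analytic bookkeeping: showing that these polynomial eigenfunctions are exactly the spectrum, with nothing missing and no continuous spectrum. For this I would invoke Proposition \ref{P1} (that $\Ha$ is self-adjoint on $\D(\Ha)$ as defined in \eqref{eq62}) and then argue completeness. The natural route is to show that the span of the $\psi_{\ell n k}$ is dense in $\H$ — plausibly because polynomials are dense (Lemma \ref{L1}-type density of $C^\infty(\bar B_1)$, plus Weierstrass) and the $\psi_{\ell n k}$ with fixed $\ell,n$ span all polynomials of the form $|x|^\ell Y_{\ell n}$ times a polynomial in $|x|^2$ of each degree, since the hypergeometric polynomials of degrees $0,1,2,\dots$ are triangular in the monomial basis. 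Combined with self-adjointness and the fact that $\Ha$ preserves each finite-dimensional space of polynomials of bounded degree (so it is block-diagonal with finite blocks), this forces the spectrum to be purely the point spectrum listed, with no residual or continuous part: a self-adjoint operator that is block-diagonalized by an orthogonal decomposition into finite-dimensional invariant subspaces spanning a dense set has pure point spectrum given by the union of the blocks' eigenvalues.

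I would then need to check two consistency points. First, that each $\psi_{\ell n k}$ genuinely lies in $\D(\Ha)$ — the interior regularity $H^3_{loc}$ is automatic for polynomials, and the asymptotic boundary condition $\rho_*\grad\psi\cdot\nu = 0$ on $\partial B_1$ (in the sense \eqref{eq52}/\eqref{eq58}) holds because $\rho_* = c(1-|x|^2)^{1/(m-1)}$ vanishes at $|x|=1$ while $\grad\psi$ stays bounded. Second, the dimension/index constraints: the exclusion of $(\ell,k)=(0,0)$ reflects that $\psi\equiv\const$ is identified with $0$ in $\H$; the restriction to $\ell\in\{0,1\}$ when $N=1$ and the multiplicity count $N_\ell$ are just the standard dimensions of spherical harmonics, to be quoted rather than rederived. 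The main obstacle I anticipate is not the ODE identification but rigorously justifying that termination of the hypergeometric series is \emph{forced} by membership in $\D(\Ha)$ — i.e.\ that the second, non-polynomial solution of the hypergeometric equation fails either the integrability \eqref{eq4} or the boundary condition at $|x|=1$ — which requires a careful look at the behavior of the two Frobenius solutions near the free boundary $z=1$, using the appendix's elliptic theory on the bounded domain $B_1$ to rule out the singular branch.
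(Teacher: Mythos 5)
Your reduction of $\Ha$ to the hypergeometric ODE in the variable $z=|x|^2$ after separating spherical harmonics is exactly the route the paper takes (Subsections~\ref{S2.2}--\ref{S2.3}), and the parameter identification $a=-k$, $b=\frac1{m-1}+\ell+\frac N2-1+k$, $c=\ell+\frac N2$ matches Lemma~\ref{L2}. But for the completeness/no-continuous-spectrum part you propose something genuinely different. The paper proves Proposition~\ref{P3} (that $\Ha^{-1}$ is compact, via the Rellich-type Lemma~\ref{L7}), so that the spectrum is automatically discrete and then one only has to find which ODE solutions lie in $\D(\Ha_\ell)$. You instead propose to use that $\Ha$ preserves each finite-dimensional space $V_d$ of polynomials of degree $\le d$, that the restriction is symmetric, and that $\bigcup_d V_d$ is dense in $\H$ (Lemma~\ref{L1} plus Weierstrass in $C^1(\bar B_1)$, noting $\rho_*$ is bounded). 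This is a valid, arguably slicker alternative: a self-adjoint operator admitting a complete orthogonal family of eigenvectors with eigenvalues escaping to infinity has spectrum exactly equal to that eigenvalue set, with no residual or continuous part. It avoids the compact-resolvent machinery altogether.

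What you should notice, though, is that if you actually carry the density-plus-block-diagonal argument through, the "main obstacle" you flag at the end dissolves. You would not need to rule out the non-polynomial Frobenius solutions at all: once the polynomial eigenfunctions are shown to be complete, any additional eigenfunction would be orthogonal to all of them and hence zero. In fact, on each invariant subspace $\mathrm{span}\{|x|^{\ell+2j}Y_{\ell n}\}_{j\ge 0}$ the operator is upper triangular in the monomial basis, and the diagonal entries are precisely $\lambda_{\ell k}=\ell+2k+2k(k+\ell+\frac N2-1)(m-1)$ by a one-line computation with $\laplace(r^{\ell+2j}Y_{\ell n})=2j(2\ell+2j+N-2)r^{\ell+2j-2}Y_{\ell n}$; even the hypergeometric machinery is only needed to write the eigenvectors in closed form. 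So you can drop the Frobenius analysis entirely in your approach.

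If instead you want to follow the paper's route and do rule out the other ODE solutions, your description of why they fail is imprecise in a way that matters. The paper's Step~1 shows the genuinely second Frobenius solution $f_2\sim r^{2-\ell-N}$ (or $\ln r$ when $c=1$) is not even in $H^1_\ell$ because of its singular behavior at the \emph{origin} $r=0$, not at the free boundary $r=1$; the weight $\rho_*(r)r^{N-1}$ there renders $(f_2')^2$ non-integrable. It is the \emph{first} solution $f_1=r^\ell F(a,b;c;r^2)$, when $-a\notin\N_0$ so that the series does not terminate, that fails at $r=1$: the connection formula \cite[15.3.6]{Abramowitz} shows $\rho_*(r)f_1'(r)\sim 1$ as $r\uparrow1$, violating the asymptotic boundary condition \eqref{40}. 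Your focus on "behavior of the two Frobenius solutions near the free boundary $z=1$" conflates these two distinct failure modes and misses the origin entirely, which is where the second solution must actually be eliminated.
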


In the statement of the theorem, we use the notation $\N_0=\N\cup\{0\}$, and $\laplace_{\S^{N-1}}$ is the Laplace--Beltrami operator on the sphere $\S^{N-1}$. The eigenfunctions corresponding to the eigenvalues $\lambda_{\ell k}$ are polynomials of degree $\ell +2k$, which are harmonic and homogeneous if $k=0$. In fact, the hypergeometric functions $F(a,b;c;z)$ in the statement of the theorem reduce to polynomials of degree $k$ in the case $a=-k$, see Subsection \ref{S2.3} below. Spherical harmonics $Y_{\ell n}$ are harmonic homogeneous polynomials of degree $\ell$. These functions are ubiquitous in mathematical physics, see e.g.\ \cite{Groemer96} to name but a single reference. Finally, in the case $N=1$, $Y_{01}$ and $Y_{11}$ are the eigenfunctions of the parity operator, cf.\ Subsection \ref{S2.2a}.

\medskip

We observe that the eigenvalues of the displacement Hessian $\Ha$ are affine functions of the parameter $m$. There are countably many constant levels $\lambda_{\ell 0}=\ell$. Above $\lambda_{10}$ and $\lambda_{20}$, the spectrum features a crossing of eigenvalues when varying $m$. A first level crossing occurs for the eigenvalues $\lambda_{30}=3$ and $\lambda_{01}= 2+ N(m-1)$ at $N(m-1)=1$, and the number of crossings increases with $\ell$. The eigenvalues coincide with the eigenvalues found by {\sc Denzler \& McCann} \cite{DenzlerMcCann05} for the fast diffusion regime in the sense that each eigenvalue continues to the range $m<1$ if $m$ is sufficiently close to $1$. For smaller $m$, the eigenvalues dissolve into continuous spectrum (that disappears in the limit $m\uparrow 1$). The occurrence of a continuous spectrum for the fast diffusion equation is analytically related to the fact that in the fast-diffusion regime, the number of moments possessed by the Barenblatt profile is finite --- in contrast to the situation for the porous medium equation. We can rule out the appearance of continuous spectrum by proving that the resolvent $\Ha^{-1}$ is compact, see Proposition \ref{P3}. In the limit $m\downarrow1$, we recover the spectrum of the Ornstein--Uhlenbeck operator $-\laplace + x\cdot \grad $. This coincidence is not surprising since the Ornstein--Uhlenbeck operator is the limit operator of $\Ha$ as $m\downarrow 1$. In this case, $\rho_*$ is a Gaussian.

\medskip

The knowledge of the spectrum of the displacement Hessian $\Ha$ suggests the asymptotic expansion to leading order of solutions $\rho=\rho(t)$ to the confined porous medium equation close to the stationary solution $\rho_*$. Arguing purely formally again, we recall the operator $\L$ defined by $\L\delta \rho=\psi$ where $\delta\rho$ and $\psi$ are related in the usual way \eqref{eq17}, cf.\ Subsection \ref{S1.3}, and observe that the spectrum of  $\Ha$ coincides with the spectrum of $\L^{-1}\Ha\L$. Moreover, the corresponding eigenfunctions $\delta\rho_{\ell kn}=\L^{-1}\psi_{\ell kn}$ form an orthogonal basis of the Hilbert space $L^2(B_1,\rho_*^{m-2}dx)$. For solutions $\rho(t)$ close to $\rho_*$, equation \eqref{eq2} reads to leading order
\[
\partial_t (\rho(t)-\rho_*)\;\approx\; -\L^{-1}\Ha\L (\rho(t)-\rho_*) ,
\]
cf.\ \eqref{eq60}, and thus, exploiting the knowledge of the complete spectrum of $\Ha$ (and thus of $\L^{-1}\Ha\L$)
\begin{eqnarray*}
\lefteqn{\rho(t)-\rho_*}\\
&=& \sum_{\ell kn} c_{\ell k n} \delta \rho_{\ell k n} e^{- \lambda_{\ell k}t} + \sum_{\ell kn}\sum_{\ell' k'n'} c_{\ell k n, \ell' k'n'} \delta \rho_{\ell k n}\delta\rho_{\ell' k' n'} e^{- (\lambda_{\ell k} + \lambda_{\ell'k'})t}+ \dots,
\end{eqnarray*}
where the $c$'s are constants that depend on the initial data $\rho(0)$ only. In order to investigate the precise asymptotics of the nonlinear equation \eqref{eq2}, one has to take into account terms of higher order in $\rho(t)-\rho_*$. This problem will be addressed in future research. In this context we should also mention the work of {\sc Koch} \cite[Ch.\ 5.3.4]{Koch}, which offers a framework which seems to be suitable for a rigorous treatment of higher order asymptotics. For the one-dimensional porous medium equation, the full asymptotics was rigorously computed by {\sc Angenent} in \cite{Angenent88}, based on an earlier investigation by {\sc Zel'dovic \& Barenblatt} \cite{ZeldovicBarenblatt58}. This present work may be considered as a first attempt to complement Angenent's result in the multivariable case. Recently, {\sc Denzler, Koch \& McCann} studied the higher order asymptotic behavior of solutions to the fast-diffusion equation \cite{DenzlerKochMcCann13}, using subtle dynamical systems arguments in weighted H\"older spaces. The authors develop a rigorous theory analogous to the one in \cite{Koch} to translate the knowledge of the spectrum of the (formally) linearized equation computed by {\sc Denzler \& McCann} \cite{DenzlerMcCann05} into information on the higher order asymptotics.

\medskip

The main purpose of understanding the full asymptotic expansion of solutions in terms of the eigenfunctions of the Hessian around the self-similar solution is to get a deeper understanding of the long-time behavior of solutions. Although we have to keep in mind that our investigation is not fully rigorous in the derivation of the displacement Hessian operator, we will in the following discuss the the role of the smallest eigenvalues and their corresponding eigenfunctions for the convergence towards the attractor $\rho_*$ in dimensions $N\ge2$.  We recall that the geodesic curves in the Wasserstein space passing through the Barenblatt solution $\rho_*$ along the vector field $\grad \psi$ are characterized  by $\rho_*(x)=\det\left(I +s\grad^2\psi\right)\rho_s(x+s\grad\psi)$ (cf.\ \eqref{eq61}). With this formula at hand, the interpretation of the eigenfunctions as generators of mass transport is immediate. The smallest eigenvalue $\lambda_{10}=1$ corresponds to transitions along the axis $e_n$ with $n\in\{1,\dots,N_1=N\}$ (see Figure \ref{fig}a). Indeed, in this case, the eigenfunction is a homogeneous polynomial of degree one, and the displacement is generated by affine perturbations of the identity map $x+sc_n e_n$ (here, $c_n$ denotes a normalizing factor). This fact is in good agreement with the optimal convergence rate of solutions of the porous medium equation to the Barenblatt profile,
\begin{figure}[t]
\begin{center}(a)\scalebox{.6}{\includegraphics{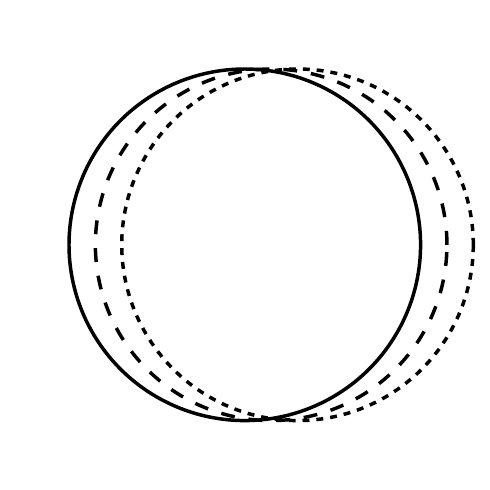}}
	(b)\scalebox{.6}{\includegraphics{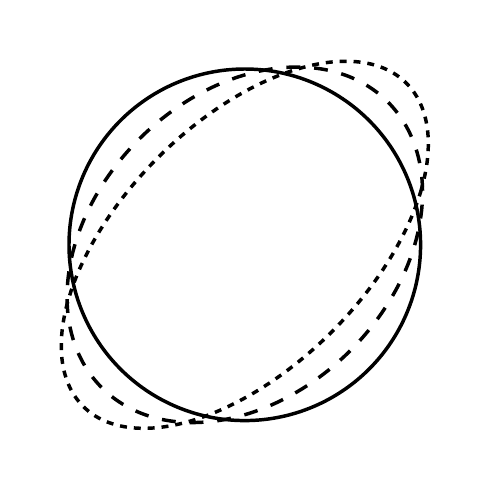}}
	(c)\scalebox{.6}{\includegraphics{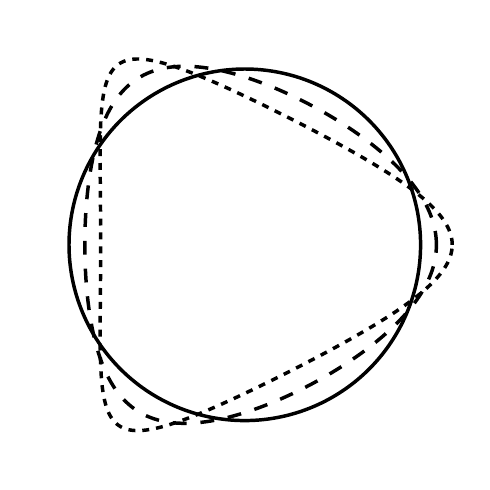}}
	(d)\scalebox{.6}{\includegraphics{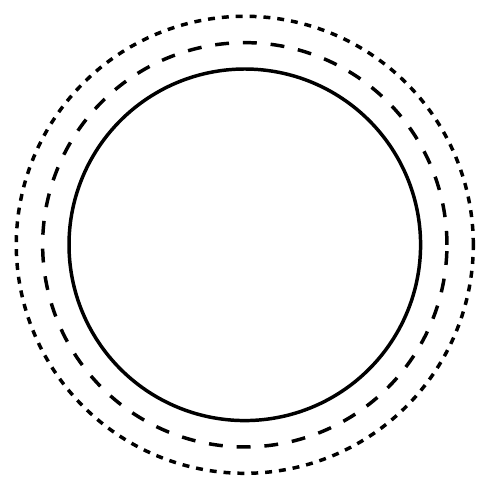}}
\end{center} \caption{2D plots of the supports of the affine transformations (dashed and dotted lines) of the Barenblatt profile (solid lines) generated by the eigenfunctions corresponding to the eigenvalues $\lambda_{10}$, $\lambda_{20}$, $\lambda_{30}$, and $\lambda_{01}$ (from the left to the right).\label{fig}} 
\end{figure}
\[
d_2(\rho(t), \rho_*)\le e^{-t}d_2(\rho_0,\rho_*),
\]
here stated in the Wasserstein topology \cite[Theorem 1]{Otto01}. In fact, spatially translated Barenblatt profiles saturate this bound. The eigenfunctions to the second smallest eigenvalue $\lambda_{20} =2$ generate affine transformations on $\R^N$ modulo rotations (Figure \ref{fig}b). To see this, we observe that $\psi_{20n}$ is a harmonic homogeneous polynomial of degree two, and thus $x+s\grad \psi=x+s A_{2n}x$ for some symmetric and trace-free matrix $A_{2n}$. In other words, solutions to \eqref{eq2} generated by affine transformations show a rate of convergence of precise order $e^{-2t}$ to the Barenblatt profile. {\sc Denzler \& McCann} \cite{DenzlerMcCann08} give an exact description of the invariant manifold corresponding to these affine perturbations. The role of the next term in the expansion depends on the value of $(m-1)N$. As indicated above, there is a first level crossing of the eigenvalues $\lambda_{\ell0}$ and $\lambda_{01}$: It is $\lambda_{01}\le\lambda_{\ell0}$ precisely for $\ell\ge 1/\alpha$. Eigenfunctions to the eigenvalue $\lambda_{01}$ correspond to isotropic dilations of the Barenblatt profile (see Figure \ref{fig}d). Indeed, the spherical harmonic $Y_{01}$ degenerates to a constant and $\psi_{011}$ is proportional to $1-(\alpha (m-1)N )^{-1}|x|^2$. It follows that $x+s\grad\psi =(1-cs)x$ for some constant $c$. Such solutions converge with a rate $e^{-t/\alpha}$. The mass transport corresponding to the $(\ell,0)$-modes with $\ell\ge3$ is less obvious,  as the corresponding eigenfunctions $\psi_{\ell0n}$ are homogenous harmonic polynomials of degree $\ell$. We only discuss the situation where $\ell=3$. In this case, the transformation is quadratic and leads to triangular deformations (see Figure \ref{fig}c). The rate of convergence is $e^{-3t}$. The deformations corresponding to the remaining eigenvalues can be similarly computed although the complexity of the underlying symmetries is increasing.

\medskip

We finally remark that the knowledge of spectral properties of the displacement Hessian $\Ha$, and of the smallest eigenvalue $\lambda_{10}=1$ in particular, immediately yields the sharp spectral gap estimate
\[
\int \rho_* |\grad \psi|^2\, dx\;\le\; \Hess E(\rho_*)(\psi,\psi),
\]
for all $\psi \in\H$. This estimate, was already derived by {\sc Otto} \cite[Sec.\ 4.4]{Otto01} and builds on {\sc McCann}'s displacement convexity \cite{McCann97}.

\medskip

We prove Theorem \ref{T1} in the remainder of this article: In Subsection \ref{S2.1} we show that the displacement Hessian $\Ha$ can be understood as a self-adjoint operator on $\H$ (Proposition \ref{P1}). Moreover, we show that its spectrum is purely discrete (Proposition \ref{P3}), and thus, the computation of the spectrum reduces to the eigenvalue problem. This requires some preparation. In Subsection \ref{S2.2}, we treat the case $N\ge2$ and make a change of variables into spherical coordinates, which has the advantage that the eigenvalue problem reduces to a one-dimensional problem for the radial components of the displacement Hessian. The $N=1$ case is considered in Subsection \ref{S2.2a}, where we split the problem by parity. The resulting eigenvalue problems are solved in Subsection \ref{S2.3} for all $N$ simultaneously.

\subsection{Self-adjointness of $\Ha$ and discreteness of the spectrum}\label{S2.1}
Since $\HaC$ is nonnegative and symmetric (cf.\ proof of Proposition \ref{P1}) and because $C^{\infty}(\bar B_1)$ is densely contained in $\H$ by Lemma \ref{L1} in the appendix, the operator $\HaC$ is closable in $\H$ and its closure is still nonnegative and symmetric. We denote this closure again by $\Ha$. Using the embedding from Lemma \ref{L4} in the appendix, one can easily verify that the domain $\D(\Ha)$ of the closed operator is given by \eqref{eq62}. In the following we show that $\Ha$ is self-adjoint and that its spectrum is purely discrete, i.e., it consists only of eigenvalues with finite multiplicity and which do not have a finite accumulation point.  For the theory of unbounded operators in general and self-adjointness in particular we refer to \cite[Ch.\ 13]{Rudin} or \cite{Schmudgen}.

\medskip

The following proposition contains a first statement on the spectrum of $\Ha$, namely that the spectrum is real and contained in the ray $(0,\infty)$. The first fact is a consequence of self-adjointness, the second one of nonnegativity together with the fact that $\Ha$ has a bounded inverse (and thus $0$ must be in the resolvent set).

\begin{proposition}[Self-adjointness]\label{P1}
The operator $\Ha: \D(\Ha)\to \H$ is nonnegative, self-adjoint, and has a bounded inverse.
\end{proposition}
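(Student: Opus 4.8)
The plan is to realize $\Ha^{-1}$ explicitly as the solution operator of a degenerate elliptic boundary value problem and then read off self-adjointness from abstract operator theory. \emph{Step 1 (symmetry and nonnegativity of $\HaC$).} For $\psi,\phi\in C^\infty(\bar B_1)$ I integrate by parts in $\la\Ha\psi,\phi\ra_{\H}=\int\rho_*\,\grad(\Ha\psi)\cdot\grad\phi\,dx$; the boundary term drops because $\rho_*=0$ on $\partial B_1$ while $\grad\phi$ stays bounded and $\Ha\psi$ extends continuously to $\bar B_1$ by \eqref{eq5}. This produces the manifestly symmetric, nonnegative identity
\[
\la\Ha\psi,\phi\ra_{\H}\;=\;m\int\rho_*^{m-2}\,\div(\rho_*\grad\psi)\,\div(\rho_*\grad\phi)\,dx .
\]
Since $C^\infty(\bar B_1)$ is dense in $\H$ (Lemma \ref{L1}), $\HaC$ is closable and its closure $\Ha$ is still symmetric and nonnegative, with domain \eqref{eq62}.

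\emph{Step 2 ($\Ha$ is onto).} Given $f\in\H$, I solve the weak problem: find $\psi\in\H$ with $\la\psi,\xi\ra_{\H}=\frac1m\int\rho_*^{2-m}f\,\xi\,dx$ for all $\xi\in\H$. The essential point is that the embedding $\H\hookrightarrow L^2(B_1,\rho_*^{2-m}\,dx)$ of Lemma \ref{L4} turns the right-hand side into a bounded linear functional of $\xi$, of norm $\le C\|f\|_{\H}$, so Riesz' theorem yields a unique such $\psi$ with $\|\psi\|_{\H}\le C'\|f\|_{\H}$. Interior elliptic regularity (on each $K\Subset B_1$ the operator $-\div(\rho_*\grad\cdot)$ is uniformly elliptic with smooth coefficients and $\rho_*^{2-m}f\in H^1_{\loc}$) bootstraps $\psi\in H^3_{\loc}(B_1)$; the distributional equation gives $\rho_*^{m-2}\div(\rho_*\grad\psi)=-\frac1m f\in\H$; and the weak identity is, by the definition \eqref{eq52}, exactly the asymptotic boundary condition $\rho_*\grad\psi\cdot\nu=0$ on $\partial B_1$. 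Hence $\psi\in\D(\Ha)$ and $\Ha\psi=f$, so $\mathrm{ran}(\Ha)=\H$, and $G\colon f\mapsto\psi$ is a bounded right inverse.

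\emph{Step 3 (conclusion).} I invoke the elementary fact that a densely defined symmetric operator $T$ with $\mathrm{ran}(T)=\H$ is self-adjoint: for $\varphi\in\D(T^*)$ pick $\psi\in\D(T)$ with $T\psi=T^*\varphi$; symmetry gives $\la T\chi,\varphi-\psi\ra=0$ for all $\chi\in\D(T)$, and since $T\chi$ exhausts $\H$ this forces $\varphi=\psi\in\D(T)$, i.e.\ $T^*\subset T$. Applied to $\Ha$ this gives self-adjointness; then $\ker\Ha=\mathrm{ran}(\Ha)^{\perp}=\{0\}$, so $\Ha$ is a closed bijection and $\Ha^{-1}=G$ is bounded (closed graph theorem, or directly from the estimate in Step 2). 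Nonnegativity forces $\mathrm{spec}(\Ha)\subset[0,\infty)$ while boundedness of $\Ha^{-1}$ excludes $0$; in fact pairing the Riesz bound with a weighted Cauchy--Schwarz inequality upgrades Step 2 to $\la\Ha\psi,\psi\ra_{\H}\ge c\,\|\psi\|_{\H}^2$, the sharp spectral gap.

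I expect Step 2 to be the crux, and within it the degeneracy of $\rho_*$ at the free boundary $\partial B_1$: for $1<m<2$ the weight $\rho_*^{2-m}$ blows up there, so the boundedness of the source functional and the integrations by parts near $\partial B_1$ genuinely rest on the weighted density and embedding theory for $\H$ assembled in the appendix (Lemmas \ref{L1} and \ref{L4}). Granting those inputs, the remaining arguments are soft functional analysis.
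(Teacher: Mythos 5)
Your proof is correct and follows essentially the same route as the paper: symmetry and nonnegativity by integration by parts exploiting the vanishing of $\rho_*$ on $\partial B_1$, surjectivity via the weighted embedding $\H\hookrightarrow L^2(B_1,\rho_*^{2-m}dx)$ of Lemma \ref{L4} together with interior elliptic regularity, and then self-adjointness plus bounded invertibility from the abstract fact that a densely defined symmetric surjection is self-adjoint (the paper cites Rudin's Theorem 13.11(c)(d) where you spell the argument out). The only cosmetic variance is in Step 2, where you appeal directly to Riesz representation on $\H$, whereas the paper routes the same solvability through its Lemma \ref{L5} on the Poisson problem — itself proved by the direct method, so for this symmetric quadratic problem the two are interchangeable.
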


The next result shows that the spectrum consists of eigenvalues only.

\begin{proposition}[Discrete spectrum]\label{P3}
The operator $\Ha: \D(\Ha)\to\H$ has a purely discrete spectrum. 
\end{proposition}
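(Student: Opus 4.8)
The plan is to prove that the bounded inverse $\Ha^{-1}$ furnished by Proposition~\ref{P1} is in fact a \emph{compact} operator on $\H$; the proposition then follows from the spectral theorem for compact self-adjoint operators. Indeed, $\Ha^{-1}$ is self-adjoint (being the inverse of a self-adjoint operator with $0$ in its resolvent set), nonnegative, and injective, so once compactness is known, $\H$ admits an orthonormal basis of eigenfunctions of $\Ha^{-1}$ with eigenvalues $\mu_j>0$ of finite multiplicity accumulating only at $0$. Consequently $\sigma(\Ha)=\{\mu_j^{-1}\}$ is a set of eigenvalues of finite multiplicity without finite accumulation point, and with empty continuous and residual parts; this is exactly the assertion that $\Ha$ has purely discrete spectrum.

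To prove compactness of $\Ha^{-1}$, I would factor it through the weighted space $L^2(B_1,\rho_*^{2-m}\,dx)$. Let $\iota\colon\H\hookrightarrow L^2(B_1,\rho_*^{2-m}\,dx)$ be the embedding of Lemma~\ref{L4}, and let $T$ be the operator assigning to $g\in L^2(B_1,\rho_*^{2-m}\,dx)$ the unique $\psi\in\H$ determined by
\[
\int\rho_*\grad\psi\cdot\grad\xi\,dx\;=\;\frac1m\int\rho_*^{2-m}g\,\xi\,dx\qquad\text{for all }\xi\in\H .
\]
Such a $\psi$ exists and is unique by the Riesz representation theorem, since the left-hand side is precisely the inner product of $\H$ and, by Lemma~\ref{L4}, the right-hand side is a bounded linear functional of $\xi\in\H$; testing with $\xi=\psi$ gives $\|\psi\|_\H^2\le\frac Cm\|g\|_{L^2(\rho_*^{2-m})}\|\psi\|_\H$, so $T$ is bounded into $\H$. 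A short integration by parts, legitimate because elements of $\D(\Ha)$ satisfy the asymptotic boundary condition \eqref{eq52}, shows $T\iota\,\Ha\psi=\psi$ for every $\psi\in\D(\Ha)$; as $\Ha\colon\D(\Ha)\to\H$ is a bijection, this identifies $T\iota=\Ha^{-1}$ as operators on $\H$. Since the composition of a bounded operator with a compact one is compact, it remains only to show that $\iota$ is compact.

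For the compactness of $\iota$, take a sequence $(\psi_k)$ bounded in $\H$, normalised within each equivalence class so that its mean over $B_{1/2}$ vanishes. On each ball $B_{1-\delta}$ the Barenblatt weight $\rho_*$ is bounded above and below by positive constants, so $\grad\psi_k$ is bounded in $L^2(B_{1-\delta})$; by the Poincar\'e inequality and the Rellich--Kondrachov theorem a diagonal subsequence converges in $L^2_{\loc}(B_1)$, hence in $L^2(B_{1-\delta},\rho_*^{2-m}\,dx)$ for every $\delta$. What remains is to control the degenerate boundary layer uniformly in $k$, i.e.\ to show that $\sup_k\int_{\{1-\delta<|x|<1\}}\rho_*^{2-m}\psi_k^2\,dx\to0$ as $\delta\downarrow0$. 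In view of $\rho_*(x)\simeq(1-|x|)^{1/(m-1)}$ near $\partial B_1$, this is a one-dimensional weighted Hardy--Poincar\'e estimate of the form $\int_{\{1-\delta<|x|<1\}}\rho_*^{2-m}\psi^2\,dx\le\omega(\delta)\,\|\psi\|_\H^2$ with $\omega(\delta)\downarrow 0$, valid for the chosen representatives (the same mechanism that underlies Lemma~\ref{L4}). Combining the interior convergence with this uniform tail bound shows $(\psi_k)$ is Cauchy in $L^2(B_1,\rho_*^{2-m}\,dx)$, so $\iota$ is compact and the proof is complete.

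The main obstacle is this last step: establishing the uniform smallness of the boundary layer at the degenerate boundary $\partial B_1$, which rests on the sharp weighted Hardy-type inequality for $\H$. The abstract factorisation $\Ha^{-1}=T\iota$ and the interior Rellich argument are routine; the weighted analysis near $\partial B_1$, where the diffusion coefficient $(1-|x|^2)$ vanishes and the weight $\rho_*^{2-m}$ may even blow up, is where the real work lies, and it is precisely the analysis one expects to be carried out in the appendix.
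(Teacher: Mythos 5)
Your proposal is correct and follows essentially the same route as the paper: both reduce discreteness of the spectrum to compactness of the resolvent $\Ha^{-1}$, and both trace that back to the compact embedding $\H\hookrightarrow L^2(B_1,\rho_*^{2-m}\,dx)$, which the paper isolates as Lemma~\ref{L7} (Rellich lemma) in the appendix and proves exactly via the interior Rellich--Kondrachov argument plus the weighted Hardy--Poincar\'e control of the degenerate boundary layer that you sketch. The only cosmetic difference is packaging: you factor $\Ha^{-1}=T\iota$ through the weighted $L^2$ space, whereas the paper runs a direct sequential argument (weak limits plus convergence of $\H$-norms via the bilinear form implies strong convergence), but both hinge on the same embedding lemma.
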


We start with the
\begin{proof}[Proof of Proposition \ref{P1}.] {\em Step 1. $\Ha$ is densely defined, nonnegative, and symmetric.} Since $C^{\infty}(\bar B_1)$ is dense in $\H$ by Lemma \ref{L1} in the appendix and $C^{\infty}(\bar B_1)\subset\D(\Ha)\subset\H$, we immediately have that $\Ha$ is densely defined. By the definition of $\Ha$ and by integration by parts we have
\[
\int \rho_* \grad \varphi \cdot \grad\left(\Ha\psi\right)\, dx\;=\; m\int \rho_*^{m-2} \div(\rho_* \grad\varphi)\div(\rho_*\grad\psi)\, dx
\]
for all $\varphi, \psi\in \D(\Ha)$. Notice that the boundary term vanishes when integrating by parts because of \eqref{eq52} and $\Ha\psi\in\H$. Now nonnegativity follows from choosing $\varphi=\psi$ and symmetry comes from the symmetry in $\varphi$ and $\psi$ of the right hand side of the above identity.

\medskip

{\em Step 2. $\Ha$ is onto.} This is a consequence of Lemmas \ref{L4} \& \ref{L5} in the appendix. Indeed, given $u\in \H$, we may without loss of generality assume that $\int \rho_*^{2-m}u\, dx=0$ as we identify functions that are the same up to a constant. By Lemma \ref{L4} we have that $u\in L^2(B_1, \rho_*^{2-m}dx)$. We define $\tilde u=\frac1m\rho_*^{2-m}u$ and observe that this function satisfies the hypotheses of Lemma \ref{L5}. Consequently, there exists a function $\psi\in H_{\loc}^2\cap\H$ such that
\begin{eqnarray*}
-\div (\rho_* \grad\psi) &= &\tilde u\quad\mbox{in }B_1,\\
\rho_* \grad \psi\cdot\nu&= &0\quad\mbox{on }\partial B_1.
\end{eqnarray*}
By the definition of $\tilde u$, the equation in the ball can be rewritten as $-m\rho_*^{m-2}\div\left(\rho_*\grad\psi\right)=u$, i.e., $\Ha \psi=u$. Higher interior regularity estimates (cf.\ \cite[Theorem 8.12]{Rudin}) moreover show that $\psi\in H_{\loc}^3\cap\H$. Hence $\psi\in\D(\Ha)$. This proves that $\Ha$ is onto.

\medskip

{\em Step 3. Conclusion.} We conclude with the help of two basic facts from abstract operator theory, namely: A densely defined symmetric operator is: (A) one-to-one if its range is dense, cf.\ \cite[Theorem 13.11(c)]{Rudin}; (B) self-adjoint and invertible with bounded inverse if it is onto, cf.\ \cite[Theorem 13.11(d)]{Rudin}. Thanks to Steps 1 and 2, $\Ha$ satisfies the assumptions of both (A) and (B), and thus, $\Ha$ is non-negative, self-adjoint, and has a bounded inverse. This completes the proof of Proposition \ref{P1}.
\end{proof}

We finally prove Proposition \ref{P3}.
\begin{proof}[Proof of Proposition \ref{P3}.] Notice that by Proposition \ref{P1}, $\Ha$ is invertible and its inverse is bounded. In particular, $0$ is in the resolvent set of $\Ha$. Hence, in order to prove that $\Ha$ has a purely discrete spectrum, it is enough to show that the resolvent $\Ha^{-1}: \H\to\H$ is compact, cf.\ \cite[Proposition 2.11]{Schmudgen}.

\medskip

In the definition of the Hilbert space $\H$, we identified functions that only differ by a constant. In the following, for notational convenience, we fix constants by additionally requiring that
\[
\int\rho_*^{2-m}\varphi\, dx \;=\;0\quad\mbox{for all }\varphi\in \H.
\]
Then $\Ha^{-1}$ is defined as follows: For any $\xi\in \H$, we have that $\psi:=\Ha^{-1}\xi$ solves
\begin{eqnarray*}
-m\rho_*^{m-2}\div (\rho_* \grad\psi) &= &\xi\quad\mbox{in }B_1,\\
\rho_* \grad \psi\cdot\nu&= &0\quad\mbox{on }\partial B_1.
\end{eqnarray*}


The argument which shows that $\Ha^{-1}$ is a compact operator is standard, and is based on a Rellich lemma, namely Lemma \ref{L7} in the appendix. We display the argument since we are dealing with weighted (and thus non-standard) Sobolev norms. Let $\{\xi_n\}_{n\in\N}$ denote a bounded sequence in $\H$. Since $\Ha^{-1}$ is a bounded operator, also the sequence $\{\psi_n\}_{n\in\N}$ with $\Ha^{-1}\xi_n=\psi_n$ is bounded. Hence, because $\H$ is a separable Hilbert space, there exist $\xi$ and $\psi$ in $\H$ and subsequences (that we do not relabel) such that $\{\xi_n\}_{n\in\N}$ and $\{\psi_n\}_{n\in\N}$ converge weakly to $\xi$ and $\psi$, respectively. Moreover, by Lemma \ref{L7} in the appendix, this convergence is strong in $L^2(B_1, \rho_*^{2-m}dx)$. We need to show that $\{\psi_n\}_{n\in\N}$ converges to $\psi$ strongly in $\H$. Integrating by parts and using the definition of $\psi_n$, we have
\[
\int \rho_*\grad\psi_n\cdot\grad\varphi\, dx \;=\;\frac1m\int\rho_*^{2-m} \xi_n\varphi\,dx,
\]
for all $\varphi\in\H$, and thus passing to the limit $n\uparrow\infty$, we see that $\psi\in\D(\Ha)$ and $\Ha^{-1}\xi=\psi$. Now, we observe by the strong convergence in $L^2(B_1,\rho_*^{2-m}dx)$ that
\[
\lim_{n\uparrow\infty} \int \rho_*|\grad\psi_n|^2\, dx\;=\; \lim_{n\uparrow\infty} \frac1m\int\rho_*^{2-m} \psi_n\xi_n\,dx \;=\;  \frac1m\int\rho_*^{2-m} \psi\xi\,dx\;=\; \int \rho_*|\grad\psi|^2\, dx,
\]
and the last identity follows from integrating by parts again and using $\Ha^{-1}\xi=\psi$ and $\psi\in\D(\Ha)$. This shows that the $\H$ norms of the sequence $\{\psi_n\}_{n\in\N}$ converge, and together with weak convergence, this yields strong convergence of $\{\psi_n\}_{n\in\N}$ (or equivalently of $\{\Ha^{-1}\xi_n\}_{n\in\N}$) in $\H$. We deduce that $\Ha^{-1}$ is compact.
\end{proof}

\subsection{Case $N\ge 2$: Separation of variables in spherical coordinates}\label{S2.2}

In this subsection, we most widely summarize Subsection 4.1 of \cite{DenzlerMcCann05}. As our argumentation for the \pme\ follows closely the one for the fast-diffusion equation by {\sc Denzler \&\ McCann}, we entirely skip proofs and refer to \cite{DenzlerMcCann05} for details. An important step in the work of Denzler and McCann is the change of perspective in the spectral analysis of the displacement Hessian $\Ha$ which comes along with the transformation of the operator into spherical coordinates. In fact, the change of variables into spherical coordinates is motivated by the crucial insight that the displacement Hessian $\Ha$ and the spherical Laplacian $\laplace_{\S^{N-1}}$ (i.e., the Laplace--Beltrami operator on the sphere $\S^{N-1}$) commute. This is basically a consequence of the fact that the evolution commutes with rotations. Consequently, both operators can be simultaneously diagonalized. In particular, as the spectrum of $\laplace_{\S^{N-1}}$ is well-known, our spectral analysis will reduce to the study of the radial part of $\Ha$, which amounts to a one-dimensional operator. We remark that this Ansatz is frequently used in the spectral analysis of Schr\"odinger operators, cf.\ \cite{NikiforovUvarov}.

\medskip

We transform $x\in\R^N$ into spherical coordinates, i.e., $x=r\omega$ with $(r,\omega)\in[0,\infty)\times\S^{N-1}$, and recall that under this transformation, the Laplacian reads
\[
\laplace_{\R^N}\;=\; \partial_r^2 + \frac{N-1}r \partial_r + \frac1{r^2}\laplace_{\S^{N-1}}.
\]
Here $\laplace_{\S^{N-1}}$ denotes the spherical Laplacian. Since the Barenblatt profile $\rho_*$ is a radially symmetric function, i.e., $\rho_*  = \rho_*(r)$, one readily checks that $ \Ha$ and $\laplace_{\S^{N-1}}$ commute, cf.\ \eqref{eq5}. 

\medskip

The eigenvalues of the spherical Laplacian $\laplace_{\S^{N-1}}$ are $\mu_{\ell} = \ell(\ell + N-2)$ with multiplicity $N_{\ell} = \frac{(N+\ell-3)!(N+2\ell-2)}{\ell!(N-2)!}$ where $N_0=1$. That is,
\[
-\laplace_{\S^{N-1}} Y_{\ell n}\;=\; \mu_{\ell} Y_{\ell n}\quad\mbox{for }\ell\in\N_0\mbox{ and }n\in\{1,\dots,N_{\ell}\},
\]
and the eigenfunctions $Y_{\ell n}$ are the spherical harmonics $Y_{\ell n}: \S^{N-1}\to\R$, which form a complete orthonormal basis for $L^2(\S^{N-1},d\omega)$:
\begin{equation}\label{eq7}
\int_{\S^{N-1}}Y_{\ell n}(\omega) Y_{\ell' n'}(\omega)\, d\omega\;=\; \delta_{\ell\ell'}\delta_{nn'}.
\end{equation}
For more details about the spectrum and the eigenfunctions of the spherical Laplacian, see also \cite[Ch.\ 3]{Groemer96}.

\medskip

Exploiting the orthonormality of the spherical harmonics, we observe that for every radially symmetric function $f: (0,1)\to \R$ the quantity $\|fY_{\ell n}\|_{H^1_{\rho_*}}$ is independent of the particular choice of $n\in\{1,\dots,N_{\ell}\}$,
\[
  \|fY_{\ell n}\|_{H_{\rho_*}^1}^2\;=\; \int_0^1 \left((f'(r))^2 + \frac{\mu_{\ell}}{r^2} (f(r))^2\right)\rho_*(r)r^{N-1}\, dr.
\]
This leads us to the definition of the norm $\| f\|_{H^1_{\ell}} = \|fY_{\ell n}\|_{H_{\rho_*}^1}$
and the corresponding Sobolev space
\[
H_{\ell}^1\;=\; \left\{ f\in H_{\loc}^1(0,1):\: \|f\|_{H^1_{\ell}}<\infty\right\}.
\]
In the case $\ell=0$ (i.e., $\mu_{\ell}=0$), we have to identify functions that only differ by a constant. In fact, $H_{\ell}^1$ is a Hilbert space and there is an isometric Hilbert space isomorphism
\[
H_{\rho_*}^1 = \bigoplus_{\ell=0}^{\infty} \bigoplus_{n=1}^{N_{\ell}} H_{\ell}^1
\]
given by
\begin{eqnarray*}
\psi (r\omega)&=& \sum_{\ell=0}^{\infty}\sum_{n=1}^{N_{\ell}} f_{\ell n}(r)Y_{\ell n}(\omega),\\
 f_{\ell n}(r)&=& \int_{\S^{N-1}}\psi(r\omega)Y_{\ell n}(\omega)\, d\omega,
\end{eqnarray*}
where the series are converging in $L^2(\S^{N-1},d\omega)$, and the isometry reads
\[
\|\psi\|_{H_{\rho_*}^1}^2 \;=\; \sum_{\ell=0}^{\infty} \sum_{n=1}^{N_{\ell}} \| f_{\ell n}\|_{H_{\ell}^1}^2.
\]

\medskip

The orthogonal decomposition of the Hilbert space $\H$ into eigenspaces generated by the spherical harmonics permits us to expand the displacement Hessian into a series of radially symmetric operators. More precisely, choosing $\psi\in\D(\Ha)$ with $\psi(r\omega)=f(r)Y_{\ell n}(\omega)$, we have $\Ha \psi = (\Ha_{\ell}f)Y_{\ell n}$, where $\Ha_{\ell}$ is the orthogonal projection of $\Ha$ onto $H_{\ell}^1$, namely
\begin{equation}\label{eq9}
\left( \Ha_{\ell} f\right)(r)\;=\; -m\rho_*(r)^{m-1}\left(f''(r) + \frac{N-1}r f'(r) - \frac{\mu_{\ell}}{r^2}f(r)\right) +  rf'(r).
\end{equation}
It is clear that this operator is again non-negative and self-adjoint with domain $\D(\Ha_{\ell}) = \{f\in H^1_{\ell}:\: fY_{\ell n}\in\D(\Ha)\}$. For an explicit definition of $\D(\Ha_{\ell})$, we have to investigate how the asymptotic boundary condition $\rho_* \grad\psi\cdot\nu=0$ behaves under the transformation into spherical coordinates. (This investigation is not necessary in the work of Denzler and McCann as they consider functions that are spread all over $\R^N$. In particular, in the fast-diffusion case the Barenblatt profile is positive everywhere.) The integrability condition on the free boundary $\partial B_1$ will become a selection criterion for identifying eigenfunctions of $\Ha_{\ell}$, and therefore, it has to be studied with care. For $\psi(r \omega)=f(r)Y_{\ell n}(\omega)$ and $\xi\in\H$, we first compute that
\[
-\int\div\left(\rho_*\grad\psi\right)\xi\, dx
\;=\;-\int_0^1 \left(\rho_*f'r^{N-1}\right)' g_{\ell n} \, dr + \mu_{\ell} \int_0^1 \rho_* f g_{\ell n} r^{N-3}\, dr,
\]
and
\[
\int \rho_*\grad\psi\cdot\grad\xi\, dx\;=\; \int_0^1 \rho_* \left(f' g_{\ell n}'  + \frac{\mu_{\ell}}{r^2} f g_{\ell n}\right) r^{N-1}\, dr,
\]
where $g_{\ell n}(r) = \int_{\S^{N-1}} \xi(r\omega) Y_{\ell n}(\omega)\, d\omega$. Consequently, \eqref{eq52} becomes
\begin{equation}\label{eq55}
\int_0^1 \rho_* f' g' r^{N-1}\, dr\; =\; - \int_0^1 (\rho_* f' r^{N-1})' g\, dr\quad\mbox{for all } g\in H^1_{\ell}.
\end{equation}
As the boundary term at $r=0$ automatically vanishes for every $f, g\in H_{\ell}^1$ with $\Ha_{\ell}f\in H^1_{\ell}$, we simply write $\left.\rho_* f'\right|_{r=1}=0$ to indicate that \eqref{eq55} holds. Now the domain of self-adjointness of $\Ha_{\ell}$ can be written as
\[
\D(\Ha_{\ell})\;=\; \left\{ f \in H_{loc}^3(0,1)\cap H_{\ell}^1:\:\Ha_{\ell} f \in H_{\ell}^1,\ \left.\rho_* f'\right|_{r=1}=0  \right\}.
\]

\begin{remark}\label{R2}
In analogy to the statement \eqref{eq58} in Remark \ref{R1}, we like to point out that the asymptotic boundary condition in the radially symmetric variables reduces to
\begin{equation}\label{40}
\lim_{r\uparrow 1}\rho_*(r)f'(r)\;=\;0,
\end{equation}
for every function $f\in C^{\infty}(0,1)$.
\end{remark}

\subsection{Case N=1: Symmetrization}\label{S2.2a}
In the case $N=1$, one easily checks that $\Ha$ commutes with the parity operator $\P\psi(x)=\psi(-x)$, and thus, both can be simultaneously diagonalized. The eigenfunctions of $\P$ are given by $Y_{\ell1}(\pm1)=(\pm1)^{\ell}$, for $\ell\in\{0,1\}$, and correspond to the eigenvalues $(-1)^{\ell}$. For notational convenience, we reuse the notation from the multidimensional situation presented in the previous subsection and denote by $H^1_{0}$ and $H^1_1$ the restriction of the Hilbert space $\H$ onto even and odd functions, respectively. With $r=|x|$ and $\omega =x/|x|$, we have the obvious isometric Hilbert space isomorphism $\H=H^1_0\oplus H^1_1$ given by
\begin{eqnarray*}
\psi(r\omega) & = & \sum_{\ell=0}^1 f_{\ell 1}(r) Y_{\ell1}(\omega),\\
f_{\ell1}(r) &=& \frac12\sum_{\omega\in\{\pm1\}} \psi(r\omega)Y_{\ell1}(\omega).
\end{eqnarray*}
We now write $\D(\Ha_{\ell})=\D(\Ha)\cap H^1_{\ell}$, and then $\Ha_{\ell}: \D(\Ha_{\ell})\to H^1_{\ell}$ is the restriction of $\Ha$ onto $H^{1}_{\ell}$, given by
\[
\Ha_{\ell}f(r) = -m\rho_*(r)^{m-1}f''(r) + rf'(r)\quad\mbox{for } f\in H^1_\ell.
\]
Notice that this formula is consistent with \eqref{eq9} because $\mu_{\ell}=0$. The asymptotic boundary condition is \eqref{eq55} with $N=1$. For symmetry reasons, it is enough to consider the norms on $H^1_{\ell}$ on the interval $(0,1)$, i.e.,
\[
\|f\|_{H^1_\ell}^2=2\int_0^1 (f'(r))^2\rho_*(r)\, dr.
\]
For $\ell=0$, we again identify functions that only differ by an additive constant.

\subsection{The eigenvalue problem for $\Ha_{\ell}$}\label{S2.3}
In this subsection, we solve the eigenvalue problem for the operators $\Ha_{\ell}: \D(\Ha_{\ell})\to H^1_{\ell}$, where $\ell\in\N_0$ if $N\ge 2$ and $\ell\in\{0,1\}$ if $N=1$.

\begin{proposition}[Eigenvalue problem for $\Ha_{\ell}$]\label{P2}
The eigenvalue problem
\[
\Ha_{\ell} f\;=\;\lambda f
\]
in $\D(\Ha_{\ell})$ has exactly the eigenvalues
\[
\lambda_{\ell k} \;=\;\ell + 2k + 2k(k + \ell + \frac N 2-1)(m-1),
\]
where $k\in\N_0$ such that $(\ell,k)\not=(0,0)$. The corresponding eigenfunctions are the polynomials
\[
f_{\ell k}(r)= r^{\ell} F(-k, \frac1{m-1} +\ell +\frac{N}2 -1 +k; \ell+\frac{N}2;r^2),
\]
where $F(-k,b;c;z) = 1+ \sum_{j=1}^k \frac{(-k)_j (b)_j}{(c)_j j!}z^j$ with $(s)_j= s(s+1)\dots(s+j-1)$.
\end{proposition}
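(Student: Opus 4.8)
The plan is to reduce the eigenvalue equation $\Ha_\ell f = \lambda f$ to a classical hypergeometric ODE and then to use the selection criterion imposed by the weighted Sobolev space $H^1_\ell$ together with the asymptotic boundary condition at $r=1$ to pick out exactly the polynomial solutions. First I would write out the ODE explicitly: using \eqref{eq51}, $m\rho_*^{m-1} = \tfrac{m-1}{2}(1-r^2)$, so that $\Ha_\ell f = \lambda f$ becomes
\[
-\tfrac{m-1}{2}(1-r^2)\Bigl(f'' + \tfrac{N-1}{r}f' - \tfrac{\mu_\ell}{r^2}f\Bigr) + r f' = \lambda f,
\]
with $\mu_\ell = \ell(\ell+N-2)$ for $N\ge2$ and $\mu_\ell=0$ for $N=1$. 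The factor $r^{-2}\mu_\ell$ signals that one should factor out the expected behaviour at the origin: substituting $f(r) = r^\ell g(r^2)$ and changing variables to $z=r^2$ turns the equation into a hypergeometric equation $z(1-z)g'' + (c - (a+b+1)z)g' - ab\,g = 0$ with $c = \ell + \tfrac N2$ and the pair $a,b$ determined by matching coefficients; one finds $a+b+1 = \ell + \tfrac N2 + \tfrac1{m-1}$ and $ab = -\tfrac{\lambda}{m-1} + (\text{something in }\ell,N)$, which after bookkeeping gives $a = -k$, $b = \tfrac1{m-1}+\ell+\tfrac N2-1+k$ once $\lambda = \lambda_{\ell k}$. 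This is essentially an indicial-equation computation and should be routine once the substitution is in place.

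Next I would argue that the only eigenfunctions lying in $\D(\Ha_\ell)$ are the terminating-series (polynomial) solutions, i.e.\ those with $a=-k\in -\N_0$. The hypergeometric equation has two linearly independent solutions near $z=0$; one is $F(a,b;c;z)$, analytic at $0$, the other behaves like $z^{1-c}$, hence like $r^{2-2\ell-N}$ in the original variable, which fails the local integrability encoded in $\|f\|_{H^1_\ell}<\infty$ (the weight $\rho_* r^{N-1}$ near $r=0$ is just $r^{N-1}$ up to a bounded factor, and $(f')^2 r^{N-1}$ or $\mu_\ell f^2 r^{N-3}$ is non-integrable for the singular branch — except in the borderline cases $\ell=0$ or $\ell=N=1$, where one checks directly using the identification modulo constants that the singular solution is still excluded). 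So integrability at the origin forces $f = c\, r^\ell F(a,b;c;z)$. Then I would examine the behaviour at $z=1$ (i.e.\ $r=1$), where the connection formulas for ${}_2F_1$ give $F(a,b;c;z) \sim \text{const} + \text{const}\,(1-z)^{c-a-b}$ as $z\uparrow1$, with $c-a-b = 1 - \tfrac1{m-1}$. Since $\rho_*(r)\sim(1-r^2)^{1/(m-1)}$ near $r=1$, the condition $\lim_{r\uparrow1}\rho_*(r)f'(r)=0$ of Remark \ref{R2}, respectively membership in $H^1_\ell$ (finiteness of $\int_0^1 (f')^2\rho_* r^{N-1}\,dr$), is satisfied by the $(1-z)^0$ piece but generically fails for the $(1-z)^{1-1/(m-1)}$ piece; a short computation shows this forces either the hypergeometric series to terminate ($a=-k$, killing the subleading branch) or the coefficient of the singular branch to vanish, which by the explicit connection coefficient $\Gamma(c)\Gamma(a+b-c)/(\Gamma(a)\Gamma(b))$ happens precisely when $a\in-\N_0$ (note $b$ and $c$ are positive, so their Gamma factors are finite and nonzero). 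Either way we land on $a=-k$, and then $\lambda$ is pinned down to $\lambda_{\ell k}$ by the relation between $\lambda$ and $ab$; conversely for each such $k$ the polynomial $f_{\ell k}$ manifestly lies in $\D(\Ha_\ell)$ since it is smooth up to $r=1$, so all these values are genuinely attained. Finally I would note that $(\ell,k)=(0,0)$ gives the constant function, which is zero in $H^1_0$, hence must be excluded.

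The main obstacle I expect is the careful analysis at the free boundary $r=1$: one must show rigorously that the non-terminating solution is \emph{not} in $\D(\Ha_\ell)$ — equivalently that it violates \eqref{eq55}/\eqref{40} — and this requires controlling the precise $(1-z)^{1-1/(m-1)}$ asymptotics of ${}_2F_1$ and its derivative near the singular point, distinguishing the cases $m-1$ large versus small (so that the exponent $1-\tfrac1{m-1}$ is positive or negative) and handling the logarithmic resonance that occurs when $\tfrac1{m-1}$ is a positive integer, i.e.\ $c-a-b\in-\N_0$. A clean way to sidestep case distinctions is to work directly with the quadratic form: show that if $f$ solves the ODE and $\rho_* f'$ does not tend to $0$ at $r=1$, then $\int_0^1(\rho_* f' r^{N-1})' g\,dr + \int_0^1 \rho_* f' g' r^{N-1}\,dr$ picks up a nonzero boundary term for suitable $g\in H^1_\ell$ (e.g.\ $g\equiv1$ near $r=1$), contradicting \eqref{eq55}; combined with the ODE's Frobenius structure at $r=1$ this still ultimately reduces to the termination condition, but phrasing it via the boundary term in \eqref{eq55} keeps the argument self-contained and avoids invoking delicate special-function identities beyond the leading connection behaviour. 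The remaining verifications — that $\lambda_{\ell k}$ is the correct formula, that $f_{\ell k}$ is the correct polynomial, and that distinct $(\ell,k)$ give distinct eigenvalues up to the known level crossings — are then direct computations with the series $F(-k,b;c;z)$.
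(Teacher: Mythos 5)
Your proposal follows the same strategy as the paper's proof: substitute $f(r)=r^{\ell}g(r^2)$ to reduce \eqref{eq10} to a hypergeometric equation with $c=\ell+\tfrac N2$ and $c-a-b=1-\tfrac1{m-1}$; discard the second Frobenius solution (behaving like $r^{2-\ell-N}$) by failure of local integrability in $H^1_\ell$, with the one-dimensional cases handled by parity; and then rule out the non-terminating analytic solution $r^\ell F(a,b;c;r^2)$ by observing that the connection coefficient $\Gamma(c)\Gamma(a+b-c)/(\Gamma(a)\Gamma(b))$ of the $(1-z)^{c-a-b}$ branch vanishes precisely when $a=-k\in -\N_0$, so that $\lim_{r\uparrow1}\rho_*(r)f'(r)=0$ holds only in that case. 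This is exactly the architecture of Steps 1--3 in the paper's proof (with the paper's Lemma \ref{L2} supplying the two solutions and their asymptotics, including the logarithmic resonance cases $c\in\N$ and $a+b-c\in\Z$ which you also correctly flag). Your closing remark about arguing via the boundary term in \eqref{eq55} rather than the pointwise limit \eqref{40} is a cosmetic rephrasing of Remark~\ref{R2}, not a different argument.
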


\medskip

From the study of the operator $\Ha$ in Proposition \ref{P1} we already know that any eigenvalue of $\Ha_{\ell}$ must be real and positive, i.e., $\lambda>0$. Notice that in view of the explicit formulas \eqref{eq9} \& \eqref{eq51} for $\Ha_{\ell}$ and $\rho_*$, the equation $\Ha_{\ell} f=\lambda f$ reads
\begin{equation}\label{eq10}
f''  + \left(\frac{N-1}r - \frac2{m-1}\frac{r}{1-r^2}\right)f' + \left(\frac{2\lambda}{m-1}\frac1{1-r^2}-\frac{\mu_{\ell}}{r^2} \right)f
\;=\; 0 \quad\mbox{in }(0,1).
\end{equation}
This is a linear differential equation of second order with singularities at the endpoints of the interval $(0,1)$. It is of Fuchsian type, that means, all singular points, here $0$, $1$, and $\infty$, are regular. For every given $\lambda$, this differential equation has two linearly independent families of solutions. It is well known that the study of second-order Fuchsian ODEs with three regular singular points is intertwined with the study of hypergeometric functions $F(a,b;c;z)$ defined by
\begin{equation}\label{eq13}
F(a,b;c;z)\;=\; \sum_{j=0}^{\infty} \frac{(a)_j (b)_j}{(c)_j j!} z^j,
\end{equation}
where $a,b,c,z\in\R$ and $c$ is not a non-positive integer. Here, the notation involves the Pochhammer symbols (or extended factorials)
\[
(s)_j = s(s+1)\dots(s+j-1),\quad\mbox{for }j\ge1,\quad\mbox{and}\quad (s)_0=1.
\]
It is easily verified that the series converges if $|z|<1$. In the case that moreover $c>b>0$, the hypergeometric function $F(a,b;c;z)$ has the integral representation
\[
F(a,b;c;z)\;=\; \frac{\Gamma(c)}{\Gamma(b)\Gamma(c-b)}\int_0^{\infty} t^{b-1} (1+t)^{a-c} (1+t-zt)^{-a}\, dt,
\]
were $\Gamma(s)$ denotes Euler's Gamma function. We finally quote the fact that is most relevant for our purposes: For every choice of $a,b,c,z\in\R$, $c$ not a non-positive integer, the function $F(a,b;c;z)$ satisfies the {\em Hypergeometric differential equation}
\begin{equation}\label{eq11}
F'' + \left(\frac{c}{z} - \frac{a+b-c+1}{1-z}\right)F' - \frac{ab}{z(1-z)} F\;=\; 0\quad\mbox{in }(0,1),
\end{equation}
where the primes indicate derivatives with respect to $z$, i.e, $F' = \partial_z F$ and $F'' = \partial_z^2 F$. For detailed discussions of hypergeometric functions, we refer to \cite{Rainville1,Rainville2,BealsWong10}. A compact catalogue of the most important facts can be found in \cite{Abramowitz}.

\medskip

Now, we come back to the differential equation \eqref{eq10} and its relation to the hypergeometric equation \eqref{eq11}. Our goal is to transform \eqref{eq10} into the hypergeometric differential equation \eqref{eq11}, for which all solutions and their asymptotics at the singular points $0$, $1$, and $\infty$ are well-known. Making the Ansatz $f(r) = r^{\gamma} F(a,b;c;r^2)$ and supposing that $f$ is a solution, \eqref{eq10} transforms into
\begin{eqnarray}
\lefteqn{F'' + \left( \frac{2\gamma +N}2\frac1{z} - \frac1{m-1}\frac1{1-z}\right)F'}\nonumber\\
&&\mbox{}+\left(\frac{\gamma(\gamma + N-2) - \mu_{\ell}}4\frac1{z^2} - \frac{\gamma-\lambda}{2(m-1)}\frac1{z(1-z)}\right)F \;=\;0,\label{eq38}
\end{eqnarray}
with $z=r^2$. Comparing this differential equation with \eqref{eq11} and identifying $a$, $b$, $c$, and $d$ yields a first solution to \eqref{eq10}. A second linearly independent solution can be deduced from the well-known (but complex) theory of hypergeometric functions. We have the following

\begin{lemma}\label{L2}
Let
\begin{eqnarray}
a &=& \frac12\left(\frac1{m-1} + \ell + \frac{N}2-1\right) - \frac12 \sqrt q,\label{eq34}\\
b &=& \frac12\left(\frac1{m-1} + \ell + \frac{N}2-1\right) + \frac12 \sqrt q,\label{eq35}\\
q &=& \frac{2\lambda}{m-1} + \left(\ell + \frac{N}2 -1\right)^2 + \frac{N-2}{m-1} + \frac1{(m-1)^2}\label{eq37},\\
c &=& \ell + \frac{N}2\label{eq36}.
\end{eqnarray}
Then the differential equation \eqref{eq10} has two linearly independent solutions $f_1(r)$ and $f_2(r)$. A first solution is of the form
\[
f_1(r)\;=\; r^{\ell} F(a,b;c;r^2),
\]
where $F$ denotes the hypergeometric function defined in \eqref{eq13}. In the case where $c$ is not a positive integer, a second solution $f_2$ is given by
\[
f_2(r)\;=\;  r^{2-\ell-N} F(a+1-c,b+1-c;2-c;r^2).
\]
If $c$ is a positive integer, $c=k\in\N$, then a second solution has the asymptotics
\[
f_2(r)\;\sim\; \left\{\begin{array}{ll}\ln r&\mbox{for }k=1\\ r^{\ell + 2-2k}&\mbox{for }k\ge2\end{array}\right\}\quad\mbox{as }0<r\ll1,
\]
and
\[
f_2'(r)\;\sim\; r^{\ell + 1-2k}\quad\mbox{as }0<r\ll1.
\]
\end{lemma}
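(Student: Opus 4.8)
The plan is to reduce the Fuchsian equation \eqref{eq10} to the hypergeometric equation \eqref{eq11} by the substitution $f(r)=r^{\gamma}F(a,b;c;r^2)$ and then to invoke the classical theory of the hypergeometric equation at its regular singular point $z=0$. As recorded in \eqref{eq38}, this substitution (with $z=r^2$) turns \eqref{eq10} into an equation of the shape $F''+(\tfrac{c'}{z}-\tfrac{\ast}{1-z})F'+(\tfrac{\ast}{z^2}+\tfrac{\ast}{z(1-z)})F=0$. Since the genuine hypergeometric equation \eqref{eq11} carries no $1/z^2$ term, the exponent $\gamma$ must be an indicial root of \eqref{eq10} at $r=0$, i.e.\ it must satisfy $\gamma(\gamma+N-2)=\mu_{\ell}=\ell(\ell+N-2)$; the two roots are $\gamma=\ell$ and $\gamma=2-\ell-N$.

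Choosing $\gamma=\ell$ for the first solution and matching the remaining coefficients of \eqref{eq38} against \eqref{eq11}: the $1/z$ term forces $c=\ell+\tfrac N2$, which is \eqref{eq36} and is in particular not a non-positive integer, so that $F(a,b;c;\cdot)$ is well defined; the $1/(1-z)$ term in $F'$ forces $a+b=c-1+\tfrac1{m-1}=\tfrac1{m-1}+\ell+\tfrac N2-1$; and the $1/(z(1-z))$ term in $F$ forces $ab=\tfrac{\ell-\lambda}{2(m-1)}$. Solving this quadratic for $a,b$ and simplifying the discriminant $(a+b)^2-4ab$ via the identity $2(\ell+\tfrac N2-1)-2\ell=N-2$ reproduces exactly \eqref{eq34}--\eqref{eq37}, so $f_1(r)=r^{\ell}F(a,b;c;r^2)$ solves \eqref{eq10} on $(0,1)$, where the hypergeometric series converges. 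The second, linearly independent solution is governed by the second indicial exponent $1-c$ of \eqref{eq11} at $z=0$: when $c$ is not a positive integer the standard companion solution $z^{1-c}F(a+1-c,b+1-c;2-c;z)$ is available, and since $\ell+2(1-c)=2-\ell-N$ it produces $f_2(r)=r^{2-\ell-N}F(a+1-c,b+1-c;2-c;r^2)$ --- which is also just the $\gamma=2-\ell-N$ branch of the same Ansatz.

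The remaining, genuinely delicate, case is $c=k\in\N$, when the two indicial exponents $0$ and $1-k$ of \eqref{eq11} at $z=0$ either coincide ($k=1$) or differ by the positive integer $k-1$ ($k\ge2$), so the pair $\{F,\,z^{1-c}F(\cdot,\cdot;2-c;\cdot)\}$ degenerates. Here I would appeal to the general Frobenius construction at a regular singular point (equivalently, to the known connection formulas for $F$): for $k=1$ the second solution acquires a logarithm, $\sim\ln z\sim\ln r$ after restoring the prefactor $r^{\ell}$; for $k\ge2$ it is of the form $z^{1-k}\cdot(\text{analytic})$ plus a possible $\ln z$--correction, and since $1-k\le-1$ the power part dominates as $z\to0$, giving $f_2(r)\sim r^{\ell+2-2k}$ and, upon differentiation, $f_2'(r)\sim(\ell+2-2k)\,r^{\ell+1-2k}$ with coefficient $\ell+2-2k=2-\ell-N\ne0$ throughout this case. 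In all situations $f_1$ and $f_2$ have distinct leading exponents at $r=0$ (or one of them a logarithm), hence are linearly independent. I expect this integer-$c$ analysis to be the main obstacle: one must argue carefully that no spurious logarithmic term contaminates the claimed leading behavior of $f_2$ and that its derivative has exactly the stated exponent, since it is precisely this asymptotic information that will later decide which solutions lie in $H^1_{\ell}$ and thereby select the eigenfunctions of $\Ha_{\ell}$.
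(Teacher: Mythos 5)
Your proposal follows the same route as the paper: reduce \eqref{eq10} to the hypergeometric equation \eqref{eq11} via the Ansatz $f(r)=r^{\gamma}F(a,b;c;r^2)$, pick $\gamma=\ell$ from the indicial equation at $r=0$, read off $c$, $a+b$, and $ab$ by matching coefficients against \eqref{eq38}, and then obtain $f_2$ from the second Frobenius exponent $1-c$. The identification of parameters and the non-integer-$c$ branch are fine, and the Wronskian/leading-exponent argument for linear independence is the same one the paper uses.

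The one place where you should go further is the integer case $c=k\in\N$, and you are right to flag it. Appealing to ``the general Frobenius construction'' gives the correct shape $z^{1-k}\cdot(\text{analytic})$ plus a possible logarithm, but it does not by itself guarantee that the coefficient of $z^{1-k}$ is nonzero, which is exactly what the stated asymptotics $f_2\sim r^{\ell+2-2k}$ and $f_2'\sim r^{\ell+1-2k}$ require. In the explicit Beals--Wong representation of the second solution $G(a,b;k;z)$ that the paper quotes, both the logarithmic term and the singular power series carry the prefactor $1/\bigl(\Gamma(a)\Gamma(b)\bigr)$, which vanishes whenever $a$ or $b$ is a nonpositive integer --- and $a=-k'$ is precisely the eigenvalue case one cares about. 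The paper resolves this by renormalizing $G$ (multiplying through by $\Gamma(a)$ and/or $\Gamma(b)$) so that the $z^{1-k}$ term survives in the limit; you would need to include this renormalization, or some equivalent nonvanishing argument, to make the asymptotics airtight. Apart from that the argument is complete and matches the paper's.
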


As the second solution $f_2$ will be discarded as a potential eigenfunction of the eigenvalue problem \eqref{eq10} in the proof of Proposition \ref{P2} below, it suffices at this point to quote only its asymptotic behavior at zero for positive integers $c$.
\begin{proof}[Proof of Lemma \ref{L2}.]
A first solution to \eqref{eq10} can be obtained by identifying a set of values $\gamma$, $a$, $b$, and $c$ for which \eqref{eq38} turns into the form \eqref{eq11}, and setting $f_1(r)= r^{\gamma}F(a,b;c;r^2)$. The form of a second linearly independent solution depends on the particular values of $a$, $b$, and $c$. In the simplest case, the second solution is obtained by an appropriate change of the dependent and independent variables. In some cases, however, the construction of the second solution relies deeply on the theory of hypergeometric functions, and we have to refer to the relevant literature.

\medskip

We start identifying values for $\gamma$, $a$, $b$, and $c$ such that the hypergeometric function $F(a,b;c;z)$ solves \eqref{eq38}. Comparing \eqref{eq38} and \eqref{eq11}, we immediately see that $\gamma(\gamma + N-2) = \mu_{\ell}$. In view of the definition $\mu_{\ell}= \ell(\ell + N-2)$ this enforces either $\gamma=\ell$ or $\gamma = 2 - N - \ell$. We choose $\gamma=\ell$. Moreover, the constant $c$ is determined by $\gamma(=\ell)$ and $N$ only, namely by $c=\ell+\frac{N}2$, i.e., \eqref{eq36}. Finally, the values for $a$ and $b$ can be derived from the identities $a + b - c + 1 =\frac1{m-1}$ and $ab=\frac{\ell - \lambda}{2(m-1)}$. A short computation yields the two solutions $a_{\pm} =b_{\mp} = \frac12\left(\frac1{m-1} + \ell + \frac{N}2-1\right) \pm \frac12 \sqrt q$, where $q = \frac{2\lambda}{m-1} + (\ell + \frac{N}2 -1)^2 + \frac{N-2}{m-1} + \frac1{(m-1)^2}$, and this quantity is positive since $\lambda$ is positive. We choose $a=a_-=b_+$ and $b=a_+=b_-$, i.e., \eqref{eq34}--\eqref{eq37}. Observe that $c$ is not a non-positive integer since $N\ge 1$ and $\ell\ge0$, so that $F(a,b;c;z)$ is well defined for $a$, $b$, and $c$ as above.

\medskip

Depending on the particular values of $a,b$ and $c$, we will in the following compute a second linearly independent solution of \eqref{eq10}. This solution will again be of the form $f_2(r) = r^{\ell} F(\tilde a,\tilde b;\tilde c;r^2)$, where $\tilde F:= F(\tilde a,\tilde b;\tilde c;r^2)$ is a second linearly independent solution of the hypergeometric equation \eqref{eq11}. Indeed, computing the Wronskian $W(f_1,f_2)$ yields
\begin{eqnarray*}
W(f_1,f_2)&=& f_1' f_2 - f_1f_2'\\
&=& 2r^{2\ell+1}( F'\tilde F - F\tilde F')\\
&=& 2r^{2\ell+1} W(F,\tilde F)\;\not=\; 0
\end{eqnarray*}
since $F$ and $\tilde F$ are linearly independent.

\medskip

 In our derivation of $\tilde F$, we follow \cite[Ch.\ 8]{BealsWong10}. If $c$ is not a positive integer, then a second linearly independent solution to the hypergeometric differential equation \eqref{eq11} is given by $z^{1-c}F(a+1-c,b+1-c;2-c;z)$ (cf.\ \cite[eq.\ (8.2.6)]{BealsWong10}) and thus $f_2(r)=r^{2-\ell-N} F(a+1-c,b+1-c;2-c;r^2)$ is a second linearly independent solution to \eqref{eq10}. Notice that $f_1$ coincides with $f_2$ in the case $c=1$ and the latter function is not even defined for larger integer values of $c$. For integers $c=k\ge1$, a second solution to the hypergeometric differential equation is given by
\begin{eqnarray*}
\lefteqn{G(a,b;k;z) \;=\; \frac{(\log z)F(a,b;k;z)}{ \Gamma(a+1-k)\Gamma(b+1-k)(k-1)!   }}\\
& +& \!\!\sum_{j=0}^{\infty} \frac{(a)_j (b)_j}{(k)_j j!} \frac{\theta(a+j) + \theta(b+j) - \theta(j+1) - \theta(k+j)}{ \Gamma(a+1-k)\Gamma(b+1-k)(k-1)!   } z^j\\
&+&\!\!  \frac{(-1)^k(k-2)!}{\Gamma(a)\Gamma(b)} \sum_{j=0}^{k-2}\frac{(a+1-k)_j (b+1-k)_j}{(2-k)_j j!} z^{j+1-k},
\end{eqnarray*}
where $\theta(s)=\frac{\Gamma'(s)}{\Gamma(s)}$ and the convention that the last term is zero if $k=1$, cf.\ \cite[eq.\ (8.4.4)]{BealsWong10}. We remark that $F(a,b;k;0)=1$. Inspecting the asymptotic behavior at the origin and eventually redefining $G(a,b;k;z)$ by multiplying by $\Gamma(a)$ and/or $\Gamma(b)$ if $a$ and/or $b$ is a nonpositive integer, we see that
\[
G(a,b;k;z)\;\sim\; \left\{\begin{array}{ll}\ln z&\mbox{for }k=1\\ z^{1-k}&\mbox{for }k\ge2\end{array}\right\}\quad\mbox{as }0<z\ll1.
\]
and
\[
\partial_z G(a,b;k;z)\;\sim\; z^{-k}\quad\mbox{as }0<z\ll1.
\]
(See also the discussion on page 275 in \cite{BealsWong10}.) It remains to set $f_2(r)=r^{\ell} G(a,b;k;r^2)$ and notice that $k=1$ precisely if $\ell=0$ and $N=2$. This proves Lemma \ref{L2}.
\end{proof}


\medskip

We are now in the position to present the

\begin{proof}[Proof of Proposition \ref{P2}.] We consider the eigenvalue problem $\Ha_{\ell} f = \lambda f$ for $f\in\D(\Ha_{\ell})$. By Proposition \ref{P1}, we already know that every eigenvalue $\lambda$ must be positive. It is readily checked that the eigenvalue equation for $\Ha_{\ell}$ is equivalent to the differential equation \eqref{eq10}. This equation is solvable for every $\lambda>0$ by Lemma \ref{L2}, and a solution $f$ to \eqref{eq10} is an eigenfunction of $\Ha_{\ell}$ if and only if $f\in\D(\Ha_{\ell})$. Moreover, $f$ can be written as a linear combination of the linearly independent solutions $f_1$ and $f_2$. As solutions of \eqref{eq10} are automatically smooth in $(0,1)$ by interior regularity results, we have to study the asymptotic behavior of $f_1$ and $f_2$ at the singular points $0$ and $1$ in order to decide whether $f_1, f_2\in\D(\Ha_{\ell})$.

\medskip

For further references we quote that
\[
\partial_z F(a,b;c;z)\;=\; \frac{ab}c F(a+1,b+1;c+1;z),
\]
cf.\ \cite[eq.\ (8.2.3)]{BealsWong10}, and therefore
\begin{equation}\label{eq29}
f_1'(r)\;=\; \ell r^{\ell-1} F(a,b;c;r^2) + 2\frac{ab}c r^{\ell+1} F(a+1,b+1;c+1;r^2),
\end{equation}
and, provided $c$ is not a positive integer,
\begin{eqnarray}
f_2'(r)&=& (2-\ell - N)r^{1-\ell-N}F(a-c+1,b-c+1,2-c;r^2)\nonumber\\
&&\mbox{} + 2\frac{(a-c+1)(b-c+1)}{2-c}r^{3-\ell-N}F(a-c+2,b-c+2;3-c;r^2).\label{eq30}
\end{eqnarray}
Moreover, one easily computes that
\begin{equation}\label{eq32}
F(a,b;c;0)\;=\; 1
\end{equation}
if $c$ is not a non-positive integer.

\medskip

{\em Step 1. It holds $f_2\not\in H^1_{\ell}$, and thus $f_2$ is not an eigenfunction of $\Ha_{\ell}$.}\\
In the one-dimensional case, we can easily rule out $f_2$ as an eigenfunction of $\Ha_{\ell}$ in view of the symmetry properties: If $\ell=0$ then $f_2$ is an odd function and if $\ell=1$ then $f_2$ is even. We now turn to the multidimensional case. We first consider the case where $c=k$ is an integer. Then, by Lemma \ref{L2} it is $f_2'(r)\sim r^{\ell+1-2k}$ asymptotically at the origin, and thus, using $2k=2\ell+N$ by \eqref{eq36}
\[
\rho_*(r)(f_2'(r))^2 r^{N-1}\;\sim\; r^{2\ell+1-4k+N}\; =\; r^{1-2k}\quad\mbox{as }0<r\ll1.
\]
The term on the right is not integrable for any $k\ge1$, and thus $\|f_2\|_{H^1_{\ell}}=\infty$. This shows that $f_2\not\in H_{\ell}^1$ in the case where $c=k$ is an integer. If $c$ is not an integer, then $c=\ell +\frac{N}2>1$ because $N\ge 2$. Our argument is based on \eqref{eq30} and \eqref{eq32}. In fact, denoting by $C_{a,b,c}$ a constant that depends only on $a$, $b$, and $c$, we have $f_2'(r)\approx (2-\ell-N)r^{1-\ell-N} +  C_{a,b,c} r^{3-\ell-N}\sim r^{1-\ell-N}$ asymptotically at the origin.  Here we have used that $\ell+N\not=2$ because $c>1$. Therefore
\[
\rho_*(r)(f_2'(r))^2 r^{N-1}\;\sim\; r^{1-2\ell-N}\; =\; r^{1-2c}\quad\mbox{as }0<r\ll1.
\]
Again, the right hand side is not integrable since $c>1$ and thus $f_2\not\in H_{\ell}^1$ in the case where $c$ is not an integer.

\medskip

{\em Step 2. For $-a\not\in \N_0$, it holds $f_1\not\in\D(\Ha_{\ell})$, and thus $f_1$ is not an eigenfunction of $\Ha_{\ell}$.}\\
To rule out solutions $f_1$ as eigenfunctions of $\Ha_{\ell}$ in the case $-a\not\in \N_0$, we investigate the asymptotics at the singularity $r=1$. Since solutions $f_1$ are smooth in $(0,1)$, the asymptotic boundary conditions in the definition of $\D(\Ha_{\ell})$ can be explicitly checked via \eqref{40}. In fact, eigenfunctions must necessarily satisfy $\lim_{r\uparrow1} \rho_*(r) f'(r)=0.$

\medskip

In the following, we show that this condition fails if $-a\not\in\N\cup\{0\}$. In the case that $a+b-c\not\in\Z$, we have the linear transformation formula
\begin{eqnarray*}
F(a,b;c;z) &=& \frac{\Gamma(c)\Gamma(c-a-b)}{\Gamma(c-a)\Gamma(c-b)} F(a,b;a+b-c+1;1-z)\\
&&\mbox{} +(1-z)^{c-a-b}\frac{\Gamma(c)\Gamma(a+b-c)}{\Gamma(a)\Gamma(b)} F(c-a,c-b;c-a-b+1;1-z),
\end{eqnarray*}
cf.\ \cite[15.3.6]{Abramowitz}. In view of \eqref{eq32} and $a+b-c\not\in\Z$, it is
\[
F(a,b;c;z)\;\approx\; \frac{\Gamma(c)\Gamma(c-a-b)}{\Gamma(c-a)\Gamma(c-b)} + (1-z)^{c-a-b} \frac{\Gamma(c)\Gamma(a+b-c)}{\Gamma(a)\Gamma(b)}\quad \mbox{as } 0<1-z\ll1.
\]
Invoking \eqref{eq29}, $b>0$, and $a+b-c+1=\frac1{m-1}$, we deduce that
\[
 \rho_*(r)f_1'(r) \;\sim\; 2\frac{\Gamma(c)\Gamma(a+b-c+1)}{\Gamma(a)\Gamma(b)}  (1-r^2)^{\frac1{m-1}+c-a-b-1}\;\sim\;1\quad \mbox{as } 0<1-r\ll1,
\]
and thus \eqref{40} is violated. Consequently, $f_1$ is not in $\D(\Ha_{\ell})$, and thus not an eigenfunction of $\Ha_{\ell}$.

\medskip

As the above linear transformation has a pole when $a+b-c\in\Z$, we have to use substitute formulas to investigate the limiting behavior at $z=1$. Such transformation formulas can be found in \cite[15.3.10, 15.3.12]{Abramowitz}. Instead of displaying the explicit expressions in the sequel, we will just quote the limiting behavior of $F(a,b;c;z)$ as $z$ converges to $1$. We note that $c<a+b+1$ since $a+b-c+1=\frac1{m-1}>0$. We have
\[
 F(a,b;a+b;z)\;\approx\; -\frac{\Gamma(a+b)}{\Gamma(a)\Gamma(b)} \ln(1-z)\quad \mbox{as } 0<1-z\ll1,
\]
and for $k\in \N$,
\[
 F(a,b;a+b-k;z)\;\approx\; \frac{\Gamma(k)\Gamma(a+b-k)}{\Gamma(a)\Gamma(b)} (1-z)^{-k}\quad \mbox{as } 0<1-z\ll1.
\]
For every $k\in\N\cup\{0\}$, the case $a+b-c =k $ is equivalent to $\frac1{m-1} = k+1$, and thus, a short computation using \eqref{eq29} yields
\[
\rho_*(r)f_1'(r)\;\sim\; (1-r^2)^{\frac1{m-1} - (k+1)}=1\quad \mbox{as } 0<1-r\ll1.
\]
This violates \eqref{40} and thus $f_1$ is not an eigenfunction of $\Ha_{\ell}$.

\medskip

{\em Step 3. For $-a=k\in \N_0$, it holds $f_1\in\D(\Ha_{\ell})$, and thus $f_1$ is an eigenfunction of $\Ha_{\ell}$. The corresponding eigenvalue is $\lambda = \ell + 2k + 2k\left(k+\ell + \frac{N}2 - 1\right)(m-1)$.}\\
We observe that the hypergeometric series in \eqref{eq13} terminates, namely
\[
F(-k,b;c;z) = \sum_{j=0}^k \frac{(a)_j(b)_j}{(c)_j j!} z^j.
\]
It follows that $f_1$ is a polynomial of degree $2k+\ell$ and thus a continuous function on $[0,1]$. In particular, the asymptotic boundary condition \eqref{40} is trivially satisfied. Regarding the integrability of $f_1$, we only have study its asymptotic behavior at $r=0$ in the case where $\mu_{\ell}\not=0$, i.e., $\ell\ge 1$. In this case, we have that $(f_1(r))^2 r^{N-3}\approx r^{2\ell +N-3}$, which is integrable, and thus $f_1\in H^1_{\ell}$. (For the one-dimensional case, we additionally observe that $f_1$ is even if $\ell=0$ and odd if $\ell=1$.) Moreover, since $\Ha_{\ell}f_1=\lambda f_1$ we also know that $\Ha_{\ell}f_1\in H^1_{\ell}$, and thus $f_1\in \D(\Ha_{\ell})$. We conclude that $f_1$ is an eigenfunction of $\Ha_{\ell}$ if $a$ is a nonpositive integer.

\medskip

It remains to express the eigenvalue $\lambda$ in terms the fixed variables. A straight forward computation based on the expression \eqref{eq34} yields that
\[
\lambda = \ell + 2k + 2k\left(k+\ell + \frac{N}2 - 1\right)(m-1).
\]
This concludes the proof of Proposition \ref{P2}.
\end{proof}


\section*{Appendix: Elliptic theory in $H_{\rho_*}^1$}

This appendix provides some results related to elliptic problems in $H_{\rho_*}^1$ that apply in the discussion of the displacement Hessian $\Ha$ in Section \ref{S2.1}. The statements in the Lemmas \ref{L1}--\ref{L5} below are the analogs of very classical facts in standard Sobolev theory: density of smooth functions, a Hardy--Poincar\'e inequality, a Rellich lemma, the Poisson problem. Our Hilbert space $\H$ differs from the well-studied Sobolev space $H^1(B_1)$ only by the weight $\rho_*$, which is, by the regularity of the Barenblatt profile in the ball $B_1$, a very mild variation: $\rho_*$ is finite and degenerates only on the boundary $\partial B_1$. In this regard, it is not surprising that the results in $\H$ can be proved analogously to or based on the known theory.

\medskip

We first give an overview of the main facts. We start with a classical density statement.

\begin{lemma}[Density of smooth functions]\label{L1}
For every $\psi\in H_{\rho_*}^1$, there exists a sequence $\left\{\psi_{\nu}\right\}_{\nu\in\N}$ in $C^{\infty}(\bar B_1)$ such that
\[
\lim_{\nu\uparrow\infty}\int \rho_* |\grad(\psi-\psi_{\nu})|^2\, dx\;=\;0.
\]
\end{lemma}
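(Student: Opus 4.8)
The plan is to combine an \emph{interior dilation} with a \emph{mollification}. Given $\psi\in\H$, set $\psi_\lambda(x):=\psi(\lambda x)$ for $\lambda\in(0,1)$. Since $\psi$ is locally integrable and $\rho_*$ is continuous and strictly positive on every $\bar B_r$ with $r<1$, the bound $\int_{B_r}|\grad\psi|^2\,dx\le(\min_{\bar B_r}\rho_*)^{-1}\|\psi\|_\H^2$ shows $\psi\in H^1_\loc(B_1)$, so $\psi_\lambda$ is well defined and lies in $H^1_\loc(B_{1/\lambda})$ with $B_{1/\lambda}\supsetneq\bar B_1$. I will show first that $\psi_\lambda\to\psi$ in $\H$ as $\lambda\uparrow1$, and second that each $\psi_\lambda$ is approximable in $\H$ by $C^\infty(\bar B_1)$ functions; a diagonal argument then finishes.

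For the first step, the only structural fact I need about the weight is the elementary monotonicity $\rho_*(y/\lambda)\le\rho_*(y)$ for $y\in B_\lambda$, $\lambda\in(0,1)$, which follows directly from $\rho_*^{m-1}\propto(1-|x|^2)_+$ and $m>1$. Combined with the substitution $y=\lambda x$ and the chain rule $\grad\psi_\lambda(x)=\lambda(\grad\psi)(\lambda x)$, it gives
\[
\int_{B_1}\rho_*|\grad\psi_\lambda|^2\,dx
=\lambda^{2-N}\int_{B_\lambda}\rho_*(y/\lambda)|\grad\psi(y)|^2\,dy
\le\lambda^{2-N}\|\psi\|_\H^2,
\]
so $\psi_\lambda\in\H$ with norm bounded uniformly for $\lambda$ near $1$. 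To upgrade this to $\grad\psi_\lambda\to\grad\psi$ in $L^2(B_1,\rho_*\,dx;\R^N)$, I write $g:=\grad\psi$, pick $h\in C_c(B_1;\R^N)$ with $\|g-h\|_{L^2(\rho_*)}$ small (legitimate since $\rho_*\,dx$ is a finite Radon measure on $B_1$), and split
\[
\|\lambda g(\lambda\tacka)-g\|_{L^2(\rho_*)}
\le\|\lambda(g-h)(\lambda\tacka)\|_{L^2(\rho_*)}
+\|\lambda h(\lambda\tacka)-h\|_{L^2(\rho_*)}
+\|h-g\|_{L^2(\rho_*)}.
\]
The first term is $\le\lambda^{1-N/2}\|g-h\|_{L^2(\rho_*)}$ by the same change of variables and monotonicity, hence small and bounded for $\lambda$ near $1$; the third is small by the choice of $h$; and the middle term tends to $0$ because $h$ is continuous with fixed compact support, so $\lambda h(\lambda\tacka)\to h$ uniformly as $\lambda\uparrow1$. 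This proves $\psi_\lambda\to\psi$ in $\H$.

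For the second step, fix $\lambda\in(0,1)$: on a neighborhood of $\bar B_1$ contained in $B_{1/\lambda}$ the function $\psi_\lambda$ belongs to the \emph{unweighted} Sobolev space $H^1$, so its mollifications $\psi_\lambda*\eta_\delta$ lie in $C^\infty(\bar B_1)$ and converge to $\psi_\lambda$ in $H^1(B_1)$ as $\delta\downarrow0$; since $\rho_*\le\bigl(\tfrac{m-1}{2m}\bigr)^{1/(m-1)}$ is bounded on $\bar B_1$, this unweighted convergence immediately implies convergence in $\H$. Extracting a diagonal sequence from the two approximations proves the lemma. The main obstacle is the first step — controlling the difference of gradients in the \emph{degenerate} weighted norm near $\partial B_1$ — and the point that makes it go through is precisely the scaling monotonicity of $\rho_*$ together with the density of $C_c(B_1)$ in $L^2(B_1,\rho_*\,dx)$; the interior regularity of $\psi$, the mollification, and the boundedness of $\rho_*$ are all routine.
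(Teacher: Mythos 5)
Your proof is correct, and it takes a genuinely different (and arguably cleaner) route than the paper's. The paper approximates in the opposite order: it first builds a $C^{\infty}(B_1)\cap\H$ approximant of $\psi$ by combining a partition of unity subordinate to a covering of $B_1$ by shrinking annuli with locally adapted mollifications, and only afterwards dilates this smooth function by $\psi\mapsto\psi((1-\tfrac1\nu)\,\cdot\,)$, invoking a dominated-convergence-type argument to pass to the limit. You instead dilate \emph{first}, proving directly that $\psi_\lambda\to\psi$ in $\H$ as $\lambda\uparrow1$ by the monotonicity $\rho_*(y/\lambda)\le\rho_*(y)$ (which controls the operator norm of the dilation on $L^2(\rho_*)$) together with density of $C_c(B_1)$ in the weighted $L^2$; then, since $\psi_\lambda\in H^1$ of a full neighborhood of $\bar B_1$, a single standard unweighted mollification suffices, with the boundedness of $\rho_*$ converting unweighted $H^1$-convergence into $\H$-convergence. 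What each buys: the paper's order-of-operations pushes the weight-sensitive work into the partition-of-unity/mollification step and lets the dilation step operate on smooth data; your reversal concentrates the weight-sensitive work in the dilation step and reduces the mollification to a textbook fact, avoiding the annular partition of unity entirely. Both hinge on the same two ingredients (dilation toward the interior, mollification), but the supporting machinery is different, and your version localizes the role of the degenerate weight to a single clean estimate.
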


The next result is an embedding theorem between weighted Lebesgue and Sobolev spaces.

\begin{lemma}[Hardy--Poincar\'e inequality]\label{L4}
Let $p>1$ such that $p+m\ge3$. Then
\begin{equation}\label{eq21}
\inf_{c\in\R}\int \rho_*^{p-m} (\psi-c)^2\, dx\;\lesssim\; \int \rho_* |\grad\psi|^2\, dx
\end{equation}
for any $\psi\in H_{\rho_*}^1$.
\end{lemma}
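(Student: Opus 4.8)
The plan is to reduce this weighted Hardy--Poincaré inequality on the unit ball to a purely one-dimensional statement by separation of variables in spherical coordinates, exactly as was done in Subsection \ref{S2.2} for the displacement Hessian itself. Writing $\psi(r\omega)=\sum_{\ell,n}f_{\ell n}(r)Y_{\ell n}(\omega)$, both sides of \eqref{eq21} decompose into sums over $(\ell,n)$: the right-hand side is $\sum_{\ell,n}\int_0^1\bigl((f_{\ell n}')^2+\tfrac{\mu_\ell}{r^2}f_{\ell n}^2\bigr)\rho_*r^{N-1}\,dr$, while the left-hand side (after choosing the optimal constant $c$, which only affects the $\ell=0$ mode) is $\sum_{\ell,n}\int_0^1 \rho_*^{p-m}f_{\ell n}^2\,r^{N-1}\,dr$ up to the projection of the constant. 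Since $\mu_\ell\ge0$ and $\mu_\ell=0$ only for $\ell=0$, the worst case is $\ell=0$, so it suffices to prove the scalar inequality
\[
\inf_{c\in\R}\int_0^1\rho_*(r)^{p-m}(f(r)-c)^2\,r^{N-1}\,dr\;\lesssim\;\int_0^1 (f'(r))^2\rho_*(r)\,r^{N-1}\,dr,
\]
and, for $\ell\ge1$, the even easier inequality without subtracting a constant (where one may also exploit the extra $\mu_\ell r^{-2}f^2$ term, or just Hardy's inequality near $r=0$). Recalling from \eqref{eq51} that $\rho_*(r)^{m-1}$ is comparable to $1-r^2$, we have $\rho_*^{p-m}=\rho_*^{p-1}\rho_*^{1-1}\cdot\rho_*^{1-m+p-1}$; more usefully, $\rho_*\sim(1-r^2)^{1/(m-1)}$, hence the weight on the left is $\sim(1-r^2)^{(p-m)/(m-1)}$ and on the right $\sim(1-r^2)^{1/(m-1)}$ near $r=1$, both finite and positive in the interior.

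The core is therefore a one-dimensional Hardy-type inequality on $(0,1)$ with weights that degenerate only at the endpoint $r=1$. I would handle the two endpoints separately. Near $r=1$: set $s=1-r$, so we must bound $\int_0^{1/2}s^{(p-m)/(m-1)}(f-c)^2\,ds$ by $\int_0^{1/2}s^{1/(m-1)}(f')^2\,ds$ for a suitable choice of $c=f(1)$ (which is well-defined as a trace since $s^{1/(m-1)}$ is integrable, so $f\in W^{1,1}$ near $s=0$ and is continuous up to $s=0$). Writing $f(s)-f(0)=\int_0^s f'(t)\,dt$ and applying Cauchy--Schwarz with the weight $t^{1/(m-1)}$, one gets $(f(s)-f(0))^2\le\bigl(\int_0^s t^{-1/(m-1)}\,dt\bigr)\bigl(\int_0^s t^{1/(m-1)}(f')^2\,dt\bigr)$; the first factor is $\lesssim s^{1-1/(m-1)}=s^{(m-2)/(m-1)}$ provided $1/(m-1)<1$, i.e.\ $m>2$, and $\lesssim s$ if $m<2$ with a logarithm at $m=2$. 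Multiplying by $s^{(p-m)/(m-1)}$ and integrating in $s$, the required finiteness of the resulting integral is governed precisely by the exponent arithmetic, and the hypothesis $p+m\ge 3$, i.e.\ $p-m\ge 3-2m$, together with $p>1$, is exactly what makes the exponent $(p-m)/(m-1)+(m-2)/(m-1)+1=(p-1)/(m-1)+1>1$ (or the analogous bound) so that Fubini/integration converges; this is the classical one-dimensional weighted Hardy inequality and I would either cite it (e.g.\ Opic--Kufner) or verify the weight condition $\sup_{0<s<1/2}\bigl(\int_0^s t^{(p-m)/(m-1)}dt\bigr)\bigl(\int_s^{1/2}t^{-1/(m-1)}dt\bigr)<\infty$ directly. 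Near $r=0$ the weights $\rho_*^{p-m}$ and $\rho_*$ are both bounded above and below by positive constants, so the inequality there is just the standard Poincaré inequality on, say, $(0,1/2)$ for the measure $r^{N-1}dr$ — trivial — and for $\ell\ge1$ one uses instead the $\mu_\ell r^{-2}$ term (Hardy at the origin) to control $f$ without subtracting a constant.

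The main obstacle, and the point requiring genuine care rather than routine estimation, is the bookkeeping at $r=1$: identifying the precise range of $(p,m)$ for which the weighted Hardy inequality holds, checking that the stated hypothesis $p>1,\ p+m\ge3$ is sufficient (and seeing where the borderline cases $m=2$ or $p+m=3$ sit — presumably the inequality is still true there, possibly with the constant blowing up, but one must confirm the constant stays finite under the stated hypotheses), and making sure the trace $f(1)$ used as the competitor constant $c$ is legitimate and that using a single constant $c$ (rather than $r$-dependent) is enough — it is, because subtracting the trace at $r=1$ kills the boundary term and the origin is handled by the other piece, but one should phrase the decomposition $\int_0^1=\int_0^{1/2}+\int_{1/2}^1$ carefully so that the constant $c$ chosen on $(1/2,1)$ is then also admissible (up to another Poincaré step) on $(0,1/2)$. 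A clean way to avoid juggling two constants is to prove the inequality with $c=f(1)$ throughout: on $(1/2,1)$ use the Hardy argument above, on $(0,1/2)$ write $f-f(1)=(f-\bar f)+(\bar f-f(1))$ where $\bar f$ is the average over $(1/2,1)$, bound the first by Poincaré on $(0,1/2)$ relative to... — in practice it is cleanest to first establish $\inf_c\int_0^1\le\int_{1/2}^1 |f-f(1)|^2 w\,dr+\int_0^{1/2}$ and note $|f(1)-\bar f|^2\lesssim\int_{1/2}^1(f')^2\rho_* dr$, absorbing everything into the right-hand side. I would present the one-dimensional reduction first, cite or prove the Hardy piece at $r=1$ with the weight condition verified under $p>1$, $p+m\ge 3$, dispatch the origin by ordinary Poincaré (and Hardy for $\ell\ge1$), and finally sum over $(\ell,n)$ using the Hilbert-space isometry of Subsection \ref{S2.2} to conclude.
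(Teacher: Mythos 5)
Your high-level strategy — decompose $\psi$ into spherical harmonics, observe that both sides of \eqref{eq21} diagonalize because the weights are radial, and then prove a one-dimensional weighted Hardy inequality mode by mode — is a legitimate alternative to what the paper actually does. The paper does not separate variables; instead it applies a one-dimensional Hardy inequality (its Lemma \ref{L3}) angle-by-angle to $\psi_\omega(r)=\hat\psi(r,\omega)$, producing an $\omega$-dependent constant, and then uniformizes that constant via a Poincar\'e inequality on $\S^{N-1}$ together with the normalization $\int \rho_*^{p-1}|x|^{-1}\psi\,dx=0$. Your route dispenses with the spherical Poincar\'e step, at the price of having to treat the $\ell\ge1$ modes (where you may not subtract a constant) with some care. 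Both routes rest on the same core one-dimensional inequality.

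The genuine gap is in your treatment of that one-dimensional Hardy inequality near $r=1$. You anchor at $c=f(1)$ and write $f(s)-f(0)=\int_0^s f'(t)\,dt$, $s=1-r$, estimating via Cauchy--Schwarz with $\int_0^s t^{-1/(m-1)}\,dt$. This fails for $1<m<2$: there $\int_0^s t^{-1/(m-1)}\,dt=\infty$, so the bound is vacuous, and more fundamentally $f$ need \emph{not} have a trace at $r=1$ (take $f(r)=|\log(1-r)|^\alpha$, small $\alpha>0$, for which $\int(1-r)^{1/(m-1)}(f')^2\,dr<\infty$ when $m<2$ yet $f(1)=\infty$). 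Your justification ``$s^{1/(m-1)}$ is integrable, so $f\in W^{1,1}$'' confuses integrability of the weight with integrability of its reciprocal — for $W^{1,1}$ you would need $\int_0^{1/2} s^{-1/(m-1)}\,ds<\infty$, i.e.\ $m>2$. The Muckenhoupt-type weight condition you write down, $\sup_s\bigl(\int_0^s t^{(p-m)/(m-1)}\,dt\bigr)\bigl(\int_s^{1/2}t^{-1/(m-1)}\,dt\bigr)<\infty$, is in fact the condition for the Hardy inequality anchored at the \emph{interior} endpoint $s=1/2$, i.e.\ $c=f(1/2)$ — not $c=f(1)$ — and it does hold under $p>1,\ p+m\ge3$ (the product is $\sim s^{(p+m-3)/(m-1)}$ for $m<2$). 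That is precisely how the paper's Lemma \ref{L3} is set up: it anchors at $r=1/2$ and proves the estimate by integration by parts, the boundary term at $r=1$ dying because $(p-1)/(m-1)>0$. You should discard the $c=f(1)$ computation entirely and work throughout with the midpoint anchor. A secondary issue: for $\ell\ge1$ the $\mu_\ell r^{-2}$ term controls things near $r=0$ only; near $r=1$ it is comparable to $f^2\rho_*$, which does not dominate $\rho_*^{p-m}f^2$ when $p<m$, so you still need the full weighted Hardy argument at $r=1$ (anchored at $r=1/2$), together with a trace estimate on $|f(1/2)|$ that uses the $\mu_\ell r^{-2}$ term or the Hardy inequality at the origin. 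Your phrase ``even easier'' glosses over this.
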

Notice that the infimum in the statement of the Hardy--Poincar\'e inequality is attained by $c=\int \rho_*^{p-m} \psi\, dx/\int \rho_*^{p-m}\, dx$. The inequality is sharp if $p+m=3$. In most cases, we will apply \eqref{eq21} with $p=2$. The general result is used in the proof of the following compactness result.

\begin{lemma}[Rellich lemma]\label{L7}
The embedding of $\H$ in $ L^2(B_1,\rho^{2-m}_*dx)$ is compact.
\end{lemma}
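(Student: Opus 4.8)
The plan is to deduce the Rellich lemma (Lemma \ref{L7}) from the preceding facts by a now-standard truncation-and-diagonalization argument, the only subtlety being the degeneration of the weight $\rho_*$ at the boundary $\partial B_1$. Let $\{\psi_n\}_{n\in\N}$ be a bounded sequence in $\H$; after passing to a subsequence we may assume $\psi_n\rightharpoonup\psi$ weakly in $\H$, and by subtracting constants (which is harmless since $\H$ identifies functions differing by a constant) we may normalize $\int\rho_*^{2-m}\psi_n\,dx = 0$. We must show $\psi_n\to\psi$ strongly in $L^2(B_1,\rho_*^{2-m}dx)$. The idea is to split $B_1$ into an interior core $B_{1-\delta}$, where the weight $\rho_*$ is bounded above and below by positive constants and the ordinary (unweighted) Rellich theorem applies, and a boundary shell $B_1\setminus B_{1-\delta}$, whose contribution to the $L^2(\rho_*^{2-m}dx)$ norm is controlled uniformly in $n$ by the Hardy--Poincar\'e inequality of Lemma \ref{L4}.

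Concretely, first I would fix $\delta>0$. On $B_{1-\delta}$ the function $\rho_*$ satisfies $c_\delta\le\rho_*\le C_\delta$ for positive constants $c_\delta, C_\delta$ (immediate from \eqref{eq51}), so the bounded sequence $\{\psi_n\}$ is bounded in the unweighted $H^1(B_{1-\delta})$; by the classical Rellich--Kondrachov theorem a further subsequence converges strongly in $L^2(B_{1-\delta})$, hence also in $L^2(B_{1-\delta},\rho_*^{2-m}dx)$ since on this region the two norms are equivalent (note $\rho_*^{2-m}$ is likewise bounded above and below there, as $2-m$ may be negative but $\rho_*$ stays away from $0$ and $\infty$). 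Second, on the shell $B_1\setminus B_{1-\delta}$ I would use Lemma \ref{L4} with $p=2$ (note $p+m=2+m\ge3$ since $m\ge1$) applied to $\psi_n$: writing $c_n=\int\rho_*^{2-m}\psi_n\,dx/\int\rho_*^{2-m}dx=0$ by our normalization,
\[
\int_{B_1\setminus B_{1-\delta}}\rho_*^{2-m}\psi_n^2\,dx \;\le\;\int_{B_1}\rho_*^{2-m}(\psi_n-c_n)^2\,dx\;\lesssim\;\int_{B_1}\rho_*|\grad\psi_n|^2\,dx\;\le\;C,
\]
a bound independent of $n$. But this alone only bounds the shell contribution; to make it \emph{small} I would instead observe that $\rho_*^{2-m}$ is integrable near $\partial B_1$ (indeed $\rho_*^{2-m}\sim(1-|x|^2)^{(2-m)/(m-1)}$ and $(2-m)/(m-1)>-1$), so together with an improved Hardy inequality or simply by absolute continuity of the integral combined with the uniform bound, the tail $\int_{B_1\setminus B_{1-\delta}}\rho_*^{2-m}\psi_n^2\,dx$ can be made uniformly small by choosing $\delta$ small; the cleanest route is to note $\int_{B_1}\rho_*^{p-m}(\psi_n-c_n)^2\lesssim C$ with $p>2$ as well, and interpolate/H\"older against the integrable factor $\rho_*^{(2-p)}$ over the shell, whose integral over $B_1\setminus B_{1-\delta}$ tends to $0$ as $\delta\downarrow0$.

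Finally I would combine the two estimates with a standard $\eps/3$ diagonal argument: given $\eps>0$, choose $\delta$ so small that the shell term is below $\eps/3$ for all $n$ (and also for the limit $\psi$, by weak lower semicontinuity of the $\H$-norm, or directly since $\psi\in\H$); then on the fixed core $B_{1-\delta}$ use the strong $L^2$-convergence to make $\int_{B_{1-\delta}}\rho_*^{2-m}(\psi_n-\psi)^2\,dx<\eps/3$ for $n$ large. A diagonal extraction over a sequence $\delta_j\downarrow0$ upgrades the ``subsequence'' conclusions to a genuine subsequence of $\{\psi_n\}$ converging in $L^2(B_1,\rho_*^{2-m}dx)$; since every subsequence has a further subsequence converging to the same weak limit $\psi$, the full sequence converges, proving compactness. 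The main obstacle is the second step---controlling the boundary shell uniformly in $n$---because $2-m$ can be negative so $\rho_*^{2-m}$ blows up at $\partial B_1$; the resolution is precisely that this blow-up is mild enough to be integrable (owing to $m>1$ giving finitely many moments plus more), so that Lemma \ref{L4} together with a H\"older estimate against the integrable weight absorbs it, which is the one place the \emph{general} ($p>2$) form of the Hardy--Poincar\'e inequality is genuinely needed.
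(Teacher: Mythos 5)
Your overall strategy is correct and matches the paper's: split $B_1$ into a core $B_{1-\delta}$ where $\rho_*$ is bounded above and below (so the unweighted Rellich--Kondrachov theorem applies) and a boundary shell whose contribution must be made uniformly small via Lemma \ref{L4}, then conclude by a diagonal argument. The problem is in your treatment of the shell, where you choose the Hardy--Poincar\'e exponent $p>2$. This is the wrong direction. Writing $\rho_*^{2-m}=\rho_*^{2-p}\,\rho_*^{p-m}$, the uniform bound from Lemma \ref{L4} controls $\int\rho_*^{p-m}\psi_n^2\,dx$, so you need the leftover factor $\rho_*^{2-p}$ to be \emph{small} on the thin shell $B_1\setminus B_{1-\delta}$. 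Since $\rho_*\sim(1-|x|^2)^{1/(m-1)}\to 0$ near $\partial B_1$, this requires $2-p>0$, i.e.\ $p<2$. Then one gets the clean estimate
\[
\int_{B_1\setminus B_{1-\delta}}\rho_*^{2-m}\psi_n^2\,dx
\;\le\;\Bigl(\sup_{B_1\setminus B_{1-\delta}}\rho_*^{2-p}\Bigr)\int_{B_1}\rho_*^{p-m}\psi_n^2\,dx
\;\lesssim\;\delta^{\frac{2-p}{m-1}}\,C,
\]
which tends to $0$ as $\delta\downarrow 0$. Such a $p$ exists compatibly with Lemma \ref{L4} because $m>1$ gives $3-m<2$, so any $p\in[\max(1,3-m),2)$ works (one must also normalize $\int\rho_*^{p-m}\psi_n\,dx=0$ rather than with the $p=2$ weight, so that the Hardy--Poincar\'e constant is the one you have fixed).

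With your choice $p>2$, the factor $\rho_*^{2-p}$ blows up at the boundary, so the $L^\infty$-on-the-shell estimate fails, and the H\"older argument you gesture at does not close either: you only control $\rho_*^{p-m}\psi_n^2$ in $L^1(B_1)$, not in any $L^q$ with $q>1$, so there is no conjugate exponent available to exploit the mere integrability of $\rho_*^{2-p}$. This is the one genuine gap; once you flip $p>2$ to $p<2$ the proof is essentially identical to the paper's.
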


We finally study the Poisson problem in $\H  $.

\begin{lemma}[Poisson problem]\label{L5}
For every $u\in L^2(B_1,\rho_*^{m-2}dx)$ with $\int u\, dx=0$ there exists a unique (up to an additive constant) $\psi\in H^2_{\loc}(B_1)\cap\H$ such that
\begin{eqnarray*}
-\div (\rho_* \grad\psi) &= &u\quad\mbox{in }B_1\\
\rho_* \grad \psi\cdot\nu&= &0\quad\mbox{on }\partial B_1.
\end{eqnarray*}
\end{lemma}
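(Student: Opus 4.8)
\textbf{Proof proposal for Lemma \ref{L5} (Poisson problem).}

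The plan is to solve the variational problem associated with the weak formulation and then upgrade the weak solution to the stated interior regularity. First I would set up the bilinear form $a(\psi,\varphi) = \int \rho_* \grad\psi\cdot\grad\varphi\, dx$ on the quotient Hilbert space $\H$ (functions modulo additive constants), and the linear functional $F(\varphi) = \int u\varphi\, dx$. The mass condition $\int u\, dx = 0$ guarantees that $F$ is well defined on the quotient space. To apply Lax--Milgram (or simply the Riesz representation theorem, since $a(\cdot,\cdot)$ is precisely the inner product on $\H$), the key point is continuity of $F$: I would estimate
\[
\left|\int u\varphi\, dx\right| \;=\; \left|\int \rho_*^{(m-2)/2} u \cdot \rho_*^{(2-m)/2}\varphi\, dx\right| \;\le\; \|u\|_{L^2(B_1,\rho_*^{m-2}dx)}\, \|\varphi\|_{L^2(B_1,\rho_*^{2-m}dx)},
\]
and then invoke the Hardy--Poincar\'e inequality (Lemma \ref{L4} with $p=2$, which satisfies $p+m\ge 3$ since $m>1$) to bound $\|\varphi\|_{L^2(B_1,\rho_*^{2-m}dx)} \lesssim \|\varphi\|_{\H}$ after normalizing the additive constant of $\varphi$ so that $\int \rho_*^{2-m}\varphi\, dx = 0$. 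This shows $F$ is a bounded linear functional on $\H$, so there is a unique $\psi\in\H$ with $a(\psi,\varphi) = F(\varphi)$ for all $\varphi\in\H$; uniqueness up to an additive constant is automatic from working in the quotient space.

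Next I would identify this weak solution with a solution of the stated boundary value problem. Testing against $\varphi\in C_c^\infty(B_1)$ immediately gives $-\div(\rho_*\grad\psi) = u$ in the distributional sense in $B_1$. Testing against general $\varphi\in\H$ (in particular $\varphi\in C^\infty(\bar B_1)$, which is dense by Lemma \ref{L1}) and comparing with the interior equation yields
\[
\int \rho_*\grad\varphi\cdot\grad\psi\, dx \;=\; -\int \varphi\,\div(\rho_*\grad\psi)\, dx \quad\text{for all }\varphi\in\H,
\]
which is exactly the weak form \eqref{eq52} of the Neumann-type asymptotic boundary condition $\rho_*\grad\psi\cdot\nu = 0$ on $\partial B_1$.

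Finally I would establish the interior regularity $\psi\in H^2_{\loc}(B_1)$. On any ball $B_r$ with $r<1$ the weight $\rho_*$ is smooth and bounded below by a positive constant (it degenerates only at $\partial B_1$), so the equation $-\div(\rho_*\grad\psi) = u$ is a uniformly elliptic equation in divergence form with smooth coefficients on $B_r$; since $u\in L^2(B_1,\rho_*^{m-2}dx) \subset L^2_{\loc}(B_1)$, standard interior elliptic regularity (e.g.\ difference quotients, or \cite[Theorem 8.8--8.12]{Rudin}) gives $\psi\in H^2(B_{r'})$ for every $r'<r<1$, hence $\psi\in H^2_{\loc}(B_1)$. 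I expect the main obstacle to be the continuity of the functional $F$ on $\H$ near the boundary --- the weight $\rho_*$ vanishes there, so $u\varphi$ need not be integrable without exploiting the extra integrability of $u$ in the weighted space together with the Hardy--Poincar\'e control of $\varphi$; all the other steps are either the Riesz representation theorem or by-now-classical interior elliptic theory away from the degeneracy.
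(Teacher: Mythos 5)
Your proposal is correct and uses essentially the same ingredients as the paper: the Hardy--Poincar\'e inequality of Lemma \ref{L4} with $p=2$ to control the linear functional, interior elliptic regularity away from $\partial B_1$ where $\rho_*$ is smooth and nondegenerate, and the weak formulation to interpret the asymptotic Neumann condition. The only cosmetic difference is that you obtain existence directly from the Riesz representation theorem for the $\H$-inner product, whereas the paper runs the direct method on the strictly convex functional $\F(\psi)=\frac12\int\rho_*|\grad\psi|^2\,dx-\int u\psi\,dx$; since the Euler--Lagrange equation of $\F$ is exactly your weak equation and the coercivity/continuity estimates coincide, the two routes are interchangeable here.
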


As before, we understand the boundary conditions in the above lemma in the sense of \eqref{eq52}.

\medskip

We first address the proof of Lemma \ref{L1}, which uses classical approximation techniques, see e.g., \cite[Sec.\ 5.3]{Evans}.
\begin{proof}[Proof of Lemma \ref{L1}.]
We fix $\psi\in H_{\rho_*}^1$ and notice that that $\psi\in H^{1}(K)$ for every compact set $K\subset B_1$ since $\rho_*$ is positive and finite away from $\partial B_1$. 
%
%
In a first step, we show that $\psi$ can be approximated by $C^{\infty}( B_1)$ functions. For this purpose, we consider $A_0= B_{\frac23}$ and annuli $A_{\nu}= B_{\frac{\nu+2}{\nu+3}}\setminus B_{\frac{\nu}{\nu+1}}$ for every $\nu\in\N$, where $B_r$ denotes the ball of radius $r$ around the origin,
to the effect of $B_1\;=\; \bigcup_{\nu=0}^{\infty} A_{\nu}$. Let $\left\{\eta_{\nu}\right\}_{\nu\in\N_0}$ be a partition of unity subordinate the covering $\left\{A_{\nu}\right\}_{\nu\in\N_0}$ and let $\left\{\zeta_{\mu}\right\}_{\mu\in\N_0}$ be a sequence of standard mollifiers. We fix $\eps>0$ arbitrarily. Since $\eta_{\nu}\psi\in H^1(A_{\nu})$, for any $\nu\in\N_0$ there exists a $\mu=\mu(\eps,\nu)\in\N_0$ such that
\[
\left(\int \rho_* |\grad\left(\eta_{\nu}\psi - \left(\eta_{\nu}\psi\right)\ast\zeta_{\mu}\right)|^2\, dx\right)^{1/2}\;\le\; \frac{\eps}{2^{\nu+1}}
\]
and
\[
\supp\left(\left(\eta_{\nu}\psi\right)\ast\zeta_{\mu}\right) \subset B_{\frac{\nu+3}{\nu+4}}\setminus B_{\frac{\nu-1}{\nu}},
\]
with $B_0=\emptyset$. We define $ \tilde\psi = \sum_{\nu=0}^{\infty} \psi_{\nu}$ where $\psi_{\nu} = \left(\eta_{\nu}\psi\right)\ast\zeta_{\mu}$. We obviously have $\tilde\psi\in C^{\infty}( B_1)$. Moreover, since $\psi=\sum_{\nu=0}^{\infty} \eta_{\nu}\psi$, we have for every compact set $K\subset B_1$:
\begin{eqnarray*}
\left(\int_K \rho_* |\grad(\psi - \tilde \psi)|^2\, dx\right)^{1/2} &\le& \sum_{\nu=0}^{\infty} \left(\int_K \rho_* |\grad(\eta_{\nu}\psi - \psi_{\nu})|^2\, dx\right)^{1/2} \\
&\le&\sum_{\nu=0}^{\infty} \frac{\eps}{2^{\nu+1}}\;=\;\eps.
\end{eqnarray*}
Taking the supremum over all $K$, this shows that $C^{\infty}(B_1)\cap \H$ is dense in $H_{\rho_*}^1$.

\medskip

In order to prove density of functions that are smooth up to the boundary, we may, due to the above argumentation, suppose that $\psi\in C^{\infty}(B_1)\cap \H$. For $x\in B_1$ and $\nu\in\N$, we define $\psi_{\nu}(x)=\psi((1-\frac1{\nu})x)$. It follows that $\psi_{\nu} \in C^{\infty}(\bar B_1)$ and
\[
\int \rho_* |\grad (\psi - \psi_{\nu})|^2\, dx\; \stackrel{\nu\uparrow\infty}{\longrightarrow}\; 0
\]
by the dominated convergence theorem.
\end{proof}

We now turn to the proof of Lemma \ref{L4}, which relies on the following one-dimensional inequality:

\begin{lemma}[Double-weight Hardy inequality]\label{L3}
Let $p>1$ such that $p+m\ge3$. Then
\begin{equation}\label{eq18}
\inf_{c\in\R}\int_0^1 (1-r)^{\frac{p-m}{m-1}} (\psi-c)^2 r^{N-1}\, dr\; \lesssim\; \int_0^1 (1-r)^{\frac1{m-1}} (\psi')^2 r^{N-1}\, dr
\end{equation}
for any $\psi\in C^1[0,1]$.
\end{lemma}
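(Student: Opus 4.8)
The plan is to change variables so that \eqref{eq18} turns into a Poincar\'e inequality for an exponential measure, which is classical. Set $\alpha:=\tfrac{p-m}{m-1}$; the hypothesis $p>1$ is equivalent to $\alpha>-1$, while $p+m\ge3$ is equivalent to $\tfrac1{m-1}\le\alpha+2$, and since $0<1-r<1$ on $(0,1)$ the latter gives $(1-r)^{1/(m-1)}\ge(1-r)^{\alpha+2}$. Hence it suffices to establish the \emph{balanced} estimate
\[
\inf_{c\in\R}\int_0^1(1-r)^{\alpha}(\psi-c)^2\,r^{N-1}\,dr\ \lesssim\ \int_0^1(1-r)^{\alpha+2}(\psi')^2\,r^{N-1}\,dr ,
\]
and when $p+m=3$ the two gradient-side weights coincide, which is why the inequality is then sharp.

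Next I would split $(0,1)$ at $r=\tfrac12$. On $(0,\tfrac12)$ the weights $(1-r)^{\alpha}$ and $(1-r)^{\alpha+2}$ are bounded above and below by positive constants, so the balanced estimate there reduces to the ordinary weighted Poincar\'e inequality $\inf_c\int_0^{1/2}(\psi-c)^2r^{N-1}\,dr\lesssim\int_0^{1/2}(\psi')^2r^{N-1}\,dr$, which is classical (it is the Poincar\'e inequality on the half-ball $B_{1/2}$ for radial functions). On $(\tfrac12,1)$ the factor $r^{N-1}$ is comparable to $1$, so it only remains to prove $\inf_c\int_{1/2}^1(1-r)^{\alpha}(\psi-c)^2\,dr\lesssim\int_{1/2}^1(1-r)^{\alpha+2}(\psi')^2\,dr$.

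For this last estimate I would substitute $v=(\alpha+1)\ln\tfrac1{1-r}$, that is $1-r=e^{-v/(\alpha+1)}$, a $C^1$ diffeomorphism of $(\tfrac12,1)$ onto $(v_0,\infty)$ with $v_0=(\alpha+1)\ln2$, and set $G(v):=\psi(r(v))$. Using $dr=\tfrac{1-r}{\alpha+1}\,dv$ one gets $(1-r)^{\alpha}\,dr=\tfrac1{\alpha+1}e^{-v}\,dv$ and $(1-r)^{\alpha+2}(\psi'(r))^2\,dr=(\alpha+1)e^{-v}(G'(v))^2\,dv$, so after absorbing constants the desired inequality is exactly $\inf_c\int_{v_0}^\infty(G-c)^2e^{-v}\,dv\lesssim\int_{v_0}^\infty(G')^2e^{-v}\,dv$. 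A shift $v\mapsto v-v_0$ turns this into the Poincar\'e (spectral-gap) inequality for the standard exponential probability measure $e^{-v}\,dv$ on $(0,\infty)$, which holds with constant $4$.

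Finally, the two estimates on $(0,\tfrac12)$ and $(\tfrac12,1)$ come with possibly different optimal constants $c_0,c_1$; enlarging the two pieces to overlap on a fixed subinterval around $r=\tfrac12$ on which $(1-r)^{\alpha}$ is bounded below, Cauchy--Schwarz on the overlap bounds $|c_0-c_1|^2$ by the right-hand side, and a routine chaining argument — also using $\int_0^1(1-r)^{\alpha}r^{N-1}\,dr<\infty$, which holds because $\alpha>-1$ — then yields \eqref{eq18} with a single constant. I expect the only delicate points to be the Jacobian bookkeeping in the substitution step (and verifying it is a genuine diffeomorphism up to the regular endpoint) and the passage, in the recombination, from two separately centered estimates to one; both are routine, and everything else rests on classical one-dimensional facts.
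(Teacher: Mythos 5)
Your argument is correct, but it takes a genuinely different route from the paper's. The paper fixes $c=\psi(1/2)$ once and for all, so that \eqref{eq18} splits into the two one-sided Hardy inequalities \eqref{eq19}--\eqref{eq20}, each proved in a few lines by integrating $\psi^2$ by parts, Cauchy--Schwarz, and Young; no patching is needed since $c$ is shared. You instead (i) reduce to the balanced right-hand weight $(1-r)^{\alpha+2}$ via $p+m\ge3\iff\tfrac1{m-1}\le\alpha+2$, which the paper does not do; (ii) handle the inner piece by the ordinary Poincar\'e inequality rather than the sharper Hardy estimate with weight $r^{N+1}$ that the paper proves (your bound is weaker but still suffices here); (iii) convert the outer piece, via $1-r=e^{-v/(\alpha+1)}$, into the spectral-gap inequality for the exponential measure on a half-line, where the paper instead integrates by parts and uses the pointwise bound $(1-r)^{(2p-3)/(m-1)}\le(1-r)^{(p-m)/(m-1)}$; and (iv) because you take $\inf_c$ separately on each piece, you need the additional overlap-and-triangle-inequality step, together with the finiteness of $\int_0^1(1-r)^\alpha r^{N-1}\,dr$, to glue the two constants, which the paper's explicit choice of $c$ avoids entirely. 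Your route gives a cleaner conceptual picture --- the exponent restriction $p+m\ge 3$ enters exactly once, and the outer estimate is revealed as the exponential-measure Poincar\'e inequality in disguise --- while the paper's route is shorter and entirely self-contained, with nothing external to quote and nothing to glue.
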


Notice that the above inequality is optimal for $p+m=3$ in the case $m\not=2$. For $p=1$ and $m=2$, however, this estimate fails (logarithmically) as the underlying Hardy inequality is critical.
 
\begin{proof}[Proof of Lemma \ref{L4}.] Since $C^{\infty}(\bar B_1)$ is dense in $H_{\rho_*}^1$ by Lemma \eqref{eq1}, it is enough to consider the case where $\psi$ is smooth up to the boundary. In the case $N=1$, \eqref{eq21} is an immediate consequence of \eqref{eq18} and the definition of $\rho_*$ in \eqref{eq51}. We thus concentrate on the case $N\ge 2$. We prove a slightly stronger statement by choosing $c=\int \rho^{p-1}_*|x|^{-1} \psi\, dx/\int \rho^{p-1}_*|x|^{-1}\, dx$, or equivalently by assuming that
\begin{equation}\label{eq25}
\int \rho_*^{p-1}|x|^{-1} \psi\, dx\;=\;0.
\end{equation}
Notice that the integral is well-defined for every $\psi\in C^{\infty}(\bar B_1)$ by the assumption on $p$ and since $N\ge2$. We rewrite the statement in spherical coordinates: Let $x=r\omega$ and $\hat\psi(r,\omega)=\psi(x)$ where $r=|x|$ and $\omega=x/|x|$. Then $|\grad \psi|^2 = (\partial_r \hat\psi)^2 + \frac1{r^2}|\grad_{\S^{N-1}} \hat \psi|^2$, where $\grad_{\S^{N-1}}$ denotes the tangential gradient and thus, by the definition of $\rho_*$, \eqref{eq21} follows from
\begin{eqnarray}
\lefteqn{\int_0^1 \int_{\S^{N-1}} (1-r)^{\frac{p-m}{m-1}} \hat\psi^2 r^{N-1}\, d\omega dr}\nonumber\\
& \lesssim & \int_0^1 \int_{\S^{N-1}} (1-r)^{\frac1{m-1}} \left((\partial_r\hat\psi)^2 + \frac1{r^2} |\grad_{\S^{N-1}}\hat\psi|^2\right)r^{N-1}\, d\omega dr.\label{eq23}
\end{eqnarray}
For any fixed $\omega$, we consider $\psi_{\omega}(r) = \hat\psi(r,\omega)$. Since $\psi_{\omega}\in C^{\infty}[0,1]$, we can apply Lemma \ref{L3} componentwise for any $\omega\in\S^{N-1}$. Observe that the infimum in \eqref{eq18} is attained by a constant $c\sim\int_0^1 (1-r)^{\frac{p-m}{m-1}} \psi_{\omega} r^{N-1}\, dr$, so that \eqref{eq18} and integration over $\S^{N-1}$ yield
\begin{eqnarray*}
\lefteqn{\int_{\S^{N-1}}\int_0^1 (1-r)^{\frac{p-m}{m-1}} \hat\psi^2 r^{N-1}\, drd\omega}\\
&\lesssim&\int_{\S^{N-1}}\left(\int_0^1 (1-r)^{\frac{p-m}{m-1}} \hat\psi r^{N-1}\, dr\right)^2 d\omega+ \int_{\S^{N-1}}\int_0^1 (1-r)^{\frac1{m-1}} (\partial_r\hat\psi)^2 r^{N-1}\, drd\omega.
\end{eqnarray*}
To derive \eqref{eq23}, we need to control the first term on the right. Integrating by parts and applying the Cauchy--Schwarz inequality, we compute
\begin{eqnarray*}
\lefteqn{\left|\int_0^1 (1-r)^{\frac{p-m}{m-1}} \hat\psi r^{N-1}\, dr\right|}\\
&=& \left|\frac{m-1}{p-1}(N-1) \int_0^1 (1-r)^{\frac{p-1}{m-1}}\hat\psi r^{N-2}\, dr+\frac{m-1}{p-1}\int_0^1 (1-r)^{\frac{p-1}{m-1}} \partial_r\hat\psi r^{N-1}\, dr \right|\\
&\lesssim& \left|\int_0^1 (1-r)^{\frac{p-1}{m-1}}\hat\psi r^{N-2}\, dr\right|\\
&&\mbox{} + \left(\int_0^1 (1-r)^{\frac{2p-3}{m-1}} r^{N-1}\, dr\right)^{1/2}\left(\int_0^1 (1-r)^{\frac1{m-1}} (\partial_r\hat\psi)^2 r^{N-1}\, dr\right)^{1/2}.
\end{eqnarray*}
Observe that the second term is finite by the assumption on $p$ and $m$. Now that the above estimate becomes
\begin{eqnarray*}
\lefteqn{\int_{\S^{N-1}}\int_0^1 (1-r)^{\frac{p-m}{m-1}} (\hat\psi)^2 r^{N-1}\, drd\omega}\\
&\lesssim&\int_{\S^{N-1}}\left(\int_0^1 (1-r)^{\frac{p-1}{m-1}} \hat\psi r^{N-2}\, dr\right)^2 d\omega+ \int_{\S^{N-1}}\int_0^1 (1-r)^{\frac1{m-1}} (\partial_r\hat\psi)^2 r^{N-1}\, drd\omega,
\end{eqnarray*}
and the statement in \eqref{eq23} follows from the estimate
\begin{equation}\label{eq22}
\int_{\S^{N-1}}\left(\int_0^1 (1-r)^{\frac{p-1}{m-1}} \hat\psi r^{N-2}\, dr\right)^2 d\omega
\;\lesssim\;
\int_{\S^{N-1}} \int_0^1 (1-r)^{\frac1{m-1}} |\grad_{\S^{N-1}}\hat\psi|^2r^{N-3}\, drd\omega.
\end{equation}
To prove \eqref{eq22}, we start by recalling the Poincar\'e inequality on a sphere (cf.\ \cite[Theorem 2.10]{Hebey99})
\[
\int_{\S^{N-1}} \xi^2\, d\omega \;\lesssim \left(\int_{\S^{N-1}} \xi\, d\omega \right)^2 + \int_{\S^{N-1}} |\grad_{\S^{N-1}}\xi|^2\, d\omega ,
\]
which we apply to $\xi= \int_0^1 (1-r)^{\frac{p-1}{m-1}}\hat\psi r^{N-2}\, dr$ to the effect of
\begin{eqnarray*}
\lefteqn{\int_{\S^{N-1}} \left(\int_0^1 (1-r)^{\frac{p-1}{m-1}}\hat\psi r^{N-2}\, dr\right)^2 d\omega}\\
&\lesssim& \left(\int_{\S^{N-1}} \int_0^1 (1-r)^{\frac{p-1}{m-1}}\hat\psi r^{N-2}\, dr d\omega \right)^2\\
&&\mbox{} + \int_{\S^{N-1}} \left(\int_0^1 (1-r)^{\frac{p-1}{m-1}}|\grad_{\S^{N-1}}\hat\psi| r^{N-2}\, dr\right)^2d\omega.
\end{eqnarray*}
The first term on the right vanishes since
\[
\int_{\S^{N-1}} \int_0^1 (1-r)^{\frac{p-1}{m-1}} \hat\psi r^{N-2}\, dr d\omega\;\sim\;\int \rho_*^{p-1}|x|^{-1} \psi\, dx\;\stackrel{\eqref{eq25}}{=}\;0.
\]
The second term can be estimated using the Cauchy--Schwarz inequality in the inner integral:
\begin{eqnarray*}
\lefteqn{\int_{\S^{N-1}} \left(\int_0^1 (1-r)^{\frac{p-1}{m-1}} |\grad_{\S^{N-1}}\hat\psi| r^{N-2}\, dr\right)^2 d\omega}\\
&\le& \int_0^1 (1-r)^{\frac{2p-3}{m-1}} r^{N-1}\, dr \int_{\S^{N-1}}\int_0^1 (1-r)^{\frac1{m-1}}  |\grad_{\S^{N-1}}\hat\psi|^2 r^{N-3}\, drd\omega.
\end{eqnarray*}
Since the prefactor is bounded by the assumption on $p$ and $m$, this proves \eqref{eq22}.
\end{proof}

\begin{proof}[Proof of Lemma \ref{L3}.] The statement \eqref{eq18} follows as a combination of the two Hardy inequalities
\begin{equation}\label{eq19}
\int_0^{1/2} (\psi-\psi(1/2))^2 r^{N-1}\, dr\;\lesssim\; \int_0^{1/2} (\psi')^2 r^{N+1}\, dr
\end{equation}
and
\begin{equation}\label{eq20}
\int_{1/2}^1 (1-r)^{\frac{p-m}{m-1}} (\psi-\psi(1/2))^2\, dr\;\lesssim\; \int_{1/2}^1 (1-r)^{\frac1{m-1}} (\psi')^2\, dr.
\end{equation}
Indeed,
\begin{eqnarray*}
\lefteqn{\int_0^1 (1-r)^{\frac{p-m}{m-1}} (\psi-\psi(1/2))^2 r^{N-1}\, dr}\\
 & \lesssim& \int_0^{1/2} (\psi-\psi(1/2))^2 r^{N-1}\, dr + \int_{1/2}^1 (1-r)^{\frac{p-m}{m-1}} (\psi-\psi(1/2))^2 \, dr\\
&\stackrel{\eqref{eq19}\&\eqref{eq20}}{\lesssim}& \int_0^{1/2} (\psi')^2 r^{N+1}\, dr + \int_{1/2}^1 (1-r)^{\frac1{m-1}} (\psi')^2\, dr\\
&\lesssim& \int_0^1 (1-r)^{\frac1{m-1}} (\psi')^2 r^{N-1}\, dr,
\end{eqnarray*}
and the term on the left can be bounded from below by taking the infimum over all $c=\psi(1/2)\in\R$.

\medskip

For proving \eqref{eq19} and \eqref{eq20}, it is enough to consider the case where $\psi(1/2)=0$. The proof of the Hardy inequality \eqref{eq19} is standard:
\begin{eqnarray*}
\int_0^{1/2} \psi^2 r^{N-1}\, dr &=& \left. \frac1N \psi^2 r^N \right|_{r=0}^{r=1/2} - \frac2N\int_0^{1/2} \psi\psi' r^N\, dr\\
&\le&\frac2N \left(\int_0^{1/2} \psi^2 r^{N-1}\, dr \right)^{1/2}\left(\int_0^{1/2} (\psi')^2 r^{N+1}\, dr \right)^{1/2}.
\end{eqnarray*}
We apply the Young inequality $2ab\le a^2+b^2$ to infer \eqref{eq19}.

\medskip

Likewise, for \eqref{eq20} we have
\begin{eqnarray*}
\lefteqn{\int_{1/2}^1 (1-r)^{\frac{p-m}{m-1}} \psi^2\, dr}\\
 &=& -\left.\frac{m-1}{p-1}(1-r)^{\frac{p-1}{m-1}} \psi^2 \right|_{r=1/2}^{r=1} + 2\frac{m-1}{p-1}\int_{1/2}^1 (1-r)^{\frac{p-1}{m-1}} \psi\psi'\, dr\\
&\le&2 \frac{m-1}{p-1}\left(\int_{1/2}^1 (1-r)^{\frac{2p-3}{m-1}}\psi^2\, dr\right)^{1/2}\left(\int_{1/2}^1 (1-r)^{\frac1{m-1}} (\psi')^2\, dr\right)^{1/2}.
\end{eqnarray*}
In order to deduce \eqref{eq20}, it remains to check that $(1-r)^{\frac{2p-3}{m-1}}\le (1-r)^{\frac{p-m}{m-1}}$ holds if $p\ge3-m$, and to apply the Young inequality.
\end{proof}

We now address Lemma \ref{L7}.

\begin{proof}[Proof of Lemma \ref{L7}.]
We derive the statement from the well-known analogous statement for regular Sobolev spaces, cf.\ \cite[p.\ 272]{Evans}. Let $\{\psi_n\}_{n\in\N}$ denote a bounded sequence in $\H$. We may without lost of generality assume that
$
\int \rho_*^{p-m}\psi_n\, dx \;=\;0,
$
where $p<2$ denotes a constant that satisfies the hypothesis of Lemma \ref{L4} (this is possible since $m>1$), and thus the sequence is uniformly bounded in $L^2(B_1, \rho_*^{p-m}dx)$ by \eqref{eq21}. As $\H$ is a separable Hilbert space, we may extract a subsequence that converges weakly to a limit $\psi $ both in $\H$ and in $L^2(B_1,\rho_*^{p-m}dx)$. Moreover, as $\rho_*$ is bounded below by a positive constant in every compact subset of $B_1$, we have that the sequence $\{ \psi_n\}_{n\in \N}$ is bounded in the (unweighted) Sobolev space $H^1(B_{1-\frac1k})$ for every $k\in\N$. By the Rellich compactness lemma, we may successively extract further subsequences $\{\psi_{\tau_k(n)}\}_{n\in\N}$ that converge to $\psi$ strongly in $L^2(B_{1-\frac1k})$. By the boundedness of $\rho_*$, we may choose a diagonal sequence $\{\psi_{\tau(k)}\}_{k\in\N} $ such that
\begin{equation}\label{eq59}
\int_{B_{1-\frac1k}} \rho_*^{2-m} (\psi-\psi_{\tau(k)})^2\, dx \;\le\; \frac{1}{k},
\end{equation}
for all $k\in\N$. For $\eps>0$ arbitrary but fixed, we write
\[
\int \rho_*^{2-m} (\psi-\psi_{\tau(k)})^2\, dx \;=\; \int_{B_{1-\eps}} \rho_*^{2-m} (\psi-\psi_{\tau(k)})^2\, dx + \int_{B_1\setminus B_{1-\eps}} \rho_*^{2-m} (\psi-\psi_{\tau(k)})^2\, dx.
\]
The first integral converges to zero if $k$ goes to infinity thanks to \eqref{eq59}. For the second one, we have that
\[
 \int_{B_1\setminus B_{1-\eps}} \rho_*^{2-m} (\psi-\psi_{\tau(k)})^2\, dx\;\lesssim\; \eps^{\frac{2-p}{m-1}}\int_{B_1} \rho_*^{p-m} (\psi-\psi_{\tau(k)})^2\, dx,
\]
and integrals on the right are bounded uniformly in $k$. Hence, letting $k$ converge to infinity, we have that 
\[
\lim_{k\uparrow\infty}\int \rho_*^{2-m} (\psi-\psi_{\tau(k)})^2\, dx \;\le\; C\eps^{\frac{2-p}{m-1}},
\]
for some $C>0$. Since $\eps$ was arbitrary and $p<2$, this proves the statement of Lemma \ref{L7}.
\end{proof}

The proof of Lemma \ref{L5} is very classical and we just sketch it.

\begin{proof}[Proof of Lemma \ref{L5}.] 
Observe that the elliptic problem is the Euler--Lagrange equation of the strictly convex functional
\[
\F(\psi)\;=\; \frac12 \int \rho_* |\grad\psi|^2\, dx - \int u\psi\, dx
\]
for $\psi\in H_{\rho_*}^1$, and thus, it suffices to prove existence of a minimizer of $\F$ in $H_{\rho_*}^1$. Indeed, by the strict convexity of $\F$, minimizers $\psi$ are unique up to additive constants and satisfy the weak Euler--Lagrange equation
\begin{equation}\label{eq53}
\int \rho_* \grad\psi\cdot\grad\xi\, dx\;=\; \int u\xi\, dx\quad\mbox{for all }\xi\in\H.
\end{equation}
In particular, choosing $\xi\in C_c^{\infty}(B_1)$, we see that $\psi$ is a distributional solution of $-\div\left(\rho_*\grad\psi\right)=u$, which also holds in the strong sense by interior regularity estimates. Moreover, with this information, \eqref{eq53} becomes
\[
\int\rho_* \grad\psi\cdot\grad\xi\, dx\;=\;-\int \div\left(\rho_*\grad\psi\right)\xi\, dx\quad\mbox{for all }\xi\in\H,
\]
i.e., $\rho_*\grad\psi\cdot\nu=0$ on $\partial B_1$. 

\medskip

To prove existence of minimizers of $\F$ in $\H$, we just mention the basic steps, following the direct method. We consider a minimizing sequence for $\F$ in $H_{\rho_*}^1$. With the help of Lemma \ref{L4}, it is easy to check that this sequence is bounded in $H_{\rho_*}^1$. This fact is based on the estimate
\begin{eqnarray*}
\int u\psi\, dx&=&\int u(\psi-c)\, dx\\
&\le&\left(\int \rho_*^{m-2}u^2\, dx\right)^{1/2}\left( \int \rho_*^{2-m}(\psi-c)^2\, dx\right)^{1/2}\\
&\lesssim& \left(\int \rho_*^{m-2}u^2\, dx\right)^{1/2}\left( \int \rho_*|\grad\psi|^2\, dx\right)^{1/2},
\end{eqnarray*}
where $c$ denotes the optimal constant of Lemma \ref{L4}, that could be introduced in the first identity thanks to the fact that $u$ has average zero. As a consequence, we can find a weakly converging subsequence. By lower semicontinuity of $\F$ with respect to weak convergence, we immediately deduce that the minimum of $\F$ in $H_{\rho_*}^1$ is attained.
\end{proof}

\section*{Acknowledgements}The author thanks Robert McCann for bringing this topic to his attention and for enlightening discussions during the process of this study. He gratefully acknowledges helpful suggestions from the anonymous referee.

\bibliographystyle{elsarticle-harv}
\bibliography{pme_lit}

\end{document}